\documentclass[11pt,reqno]{amsart}
\usepackage{amssymb,latexsym}
\usepackage{amsmath,color}
\usepackage{amsthm}
\usepackage{epsfig}
\usepackage{graphicx}
\usepackage{fancybox,framed}
\usepackage{titletoc}
\usepackage{dsfont}
\usepackage{nccbbb}
\usepackage{amsmath}
\usepackage{amssymb}
\usepackage{amssymb}
\usepackage{amstext}
\usepackage{amsopn}
\usepackage{amsbsy}
\usepackage{amscd}
\usepackage{amsxtra}
\usepackage{accents}
\usepackage{bbm}
\usepackage{yfonts}
\usepackage{bbm}

%%特殊字體
%\usepackage{unicode-math}
%\setmathfont{texgyrepagella-math.otf}
%\setmathfont{XITS Math}
%\setmathfont[version=setB,StylisticSet=1]{XITS Math}
%%

%\usepackage{unicode-math}

%\usepackage{palatino,mathpazo}
\numberwithin{equation}{section}
%\newcounter{mnotecount}[section]

%%%%%%%%%%%%%%%%%%%%%%% 此部分為 box 的指令
\usepackage{xparse}
\usepackage{microtype}
\usepackage[margin=1cm]{geometry}

\usepackage{lipsum}

\ExplSyntaxOn
\box_new:N \l_hideparbox_box

\NewDocumentCommand{\hideparbox}{O{c}mm+m}
 {% #1=alignment, #2=height, #3=width, #4=text
  \group_begin:
  \vbox_set:Nn \l_hideparbox_box
   {
    \use:c { @parboxrestore }
    \hsize=#3\scan_stop:
    \strut#4\par
   }
  \vbadness=\c_ten_thousand % no spurious underfull messages
  \vbox_set_split_to_ht:NNn \l_hideparbox_box \l_hideparbox_box { #2 }
  \parbox[#1][#2]{#3}
   {
    \vbox_unpack:N \l_hideparbox_box
   }
  \group_end:
 }
\ExplSyntaxOff
%%%%%%%%%%%%%%%%%%%%%%%%%

\usepackage[T1]{fontenc}
\topmargin=-0.6in \oddsidemargin=-1.08cm \evensidemargin=-1.08cm
\textheight252mm \textwidth182mm
\newtheorem{theorem}{Theorem}[section]
\newtheorem{proposition}[theorem]{Proposition}

\newtheorem{corollary}[theorem]{Corollary}
\newtheorem{remark}{Remark}[section]

\theoremstyle{definition}

\newtheorem{example}{Example}[section]

\newcommand{\ds}{\displaystyle}

\newcommand{\dz}{\mathrm{d}z}

\newcommand{\bi}{b_{\mathrm{i}}}
\newcommand{\bo}{b_{\mathrm{o}}}
\newcommand{\phl}{\phi_{\lambda}}
\newcommand{\psl}{\psi_{\lambda}}
\newcommand{\gi}{\Gamma_{\lambda,\mathrm{i}}}
\newcommand{\go}{\Gamma_{\lambda,\mathrm{o}}}
\newcommand{\gfi}{g_{\mathrm{i}}}
\newcommand{\gfo}{g_{\mathrm{o}}}
\newcommand{\ue}{u_\lambda}
\newcommand{\abar}{\vec{\boldsymbol{\mathrm{a}}}}
\newcommand{\lomega}{\Omega_{\lambda}}

\newcommand{\ve}{v_{\lambda}}
\newcommand{\dx}{\,\mathrm{d}x}
\newcommand{\dy}{\,\mathrm{d}y}

\usepackage[misc]{ifsym}
\allowdisplaybreaks

\title[Linear convection--diffusion equations with integral boundary conditions]{\bf On the uniqueness of linear convection--diffusion equations with integral boundary conditions}

\address{ \tt (Chiun-Chang Lee) \rm Institute for Computational and Modeling Science, National Tsing Hua University, Hsinchu 30013, Taiwan}
\email{chiunchang@gmail.com, lee2@mx.nthu.edu.tw}

\address{\tt (Masashi Mizuno)  \rm Department of Mathematics, College of Science and Technology, Nihon University, 1-8-14 Kanda-Surugadai, Chiyoda-Ku, Tokyo, 101-8308, Japan}
\email{mizuno.masashi@nihon-u.ac.jp}

\address{\tt (Sang-Hyuck Moon)  \rm Department of Mathematical Sciences, College of Natural Sciences, Ulsan National Institute of Science and Technology, Republic of Korea}
\email{mshyeog@gmail.com}

%\author{Bi-Wei Lin}
%\address{Bi-Wei Lin, Institute for Computational and Modeling Science, National Tsing Hua University, Hsinchu 30013, Taiwan}
%\email{s109064801@m109.nthu.edu.tw}

\begin{document}
{\scriptsize 
%Under consideration for publication in {\bf\slshape } - %
-\vspace{-16mm} \\
\hspace*{6pt}Accepted by C. R. Math. Acad. Sci. Paris\hspace{125pt}
Revised manuscript submitted (1st 2022/4/30; 2nd 2022/6/6)
}
\vspace{-3.6mm} \\
\rule{48em}{0.2pt}\\
%%%%%%%%%
% Title %
%%%%%%%%%

\maketitle
\vspace*{-8pt}
\begin{center}
\small  Chiun-Chang Lee\footnote[100]{Author to whom the correspondence should be addressed.}  \ \ \ \ \ \  Masashi Mizuno  \ \ \ \ \ \ Sang-Hyuck Moon
\end{center}
\begin{center}
    ...................................................................................................................................................\vspace{-8pt}
\\
\end{center}
\begin{abstract} 
{\footnotesize 
%We investigate a class of  convection--diffusion equations in an expanding domain involving a parameter, where we consider integral boundary conditions that depend non-locally on unknown solutions. Generally, the uniqueness result of this type of equation is unclear. In this work, we obtain a uniqueness result when the domain is sufficiently large or small. This approach has the advantage of transforming the integral boundary conditions into new Dirichlet boundary conditions so that we can obtain refined estimates, and the comparison theorem can be applied to the equations. Furthermore, we show a domain such that under different boundary data, the equation in this domain can have infinitely numerous solutions or no solution. This work may contribute to the first understanding of the domain size's effect  on the existence and uniqueness of the linear convection--diffusion equation with integral-type boundary~conditions.

This work contributes to an understanding of the domain size's effect on the existence and uniqueness of the linear convection--diffusion equation with integral-type boundary~conditions, where boundary conditions depend non-locally on unknown solutions. Generally, the uniqueness result of this type of equation is unclear. In this preliminary study, a uniqueness result is verified when the domain is sufficiently large or small. The main approach has an advantage of transforming the integral boundary conditions into new Dirichlet boundary conditions so that we can obtain refined estimates, and the comparison theorem can be applied to the equations. Furthermore, we show a domain such that under different boundary data, the equation in this domain can have infinitely numerous solutions or no solution. 
\vspace{1pt}
\\ 
{\sc Keywords.} {\tt\tiny Convection--diffusion equations $\cdot$ Integral boundary conditions $\cdot$ Expanding domains $\cdot$ Capacity $\cdot$ Uniqueness}\\
{\sc AMS  Classification.}  {\tt\tiny 34B10 $\cdot$ 34D15 $\cdot$  34E05 $\cdot$  34K26 $\cdot$ 35J25}}\vspace{-16pt}\\
\end{abstract}

%\tableofcontents

%\thispagestyle{empty}
%\setcounter{page}{1}
\pagenumbering{arabic}

\begin{center}
     ...................................................................................................................................................
\end{center}

\section{\bf Introduction and the statement of the main results} 
\noindent

This study examines the existence and uniqueness of the solutions to linear convection--diffusion equations with integral boundary conditions, where the domain has numerous boundary components and a positive parameter.

We first consider a domain with two boundary components for clarity.  Define a smooth bounded domain
\begin{align}\label{domain}
   \lomega:=\left\{\lambda z:z\in\Omega:=\Omega_{\mathrm{out}}\setminus\overline{\Omega_{\mathrm{in}}}\right\}
\end{align}
which is diffeomorphic to an annulus and expands as $\lambda$ tends to infinity, where $\Omega_{\mathrm{in}}$ and $\Omega_{\mathrm{out}}$ represent bounded simply-connected smooth domains in $\mathbb{R}^N$, $N\geq2$, such that $0\in\Omega_{\mathrm{in}}\subsetneqq\Omega_{\mathrm{out}}$.   This study focuses primarily on the linear convection--diffusion equation
\begin{align}\label{equl}
-\Delta\ue(x)+\abar(x)\cdot\nabla\ue(x)+h(x)\ue(x)=0,\quad\,x\in\lomega,
\end{align}
subject to the following integral-type boundary conditions for $\ue$:
\begin{align}\label{bdu}
\begin{cases}
\ds\ue(x)=\bi+  \int_{\lomega}\gfi(y)\ue(y)\dy,\quad\,\,x\in\gi:=\left\{\lambda z:z\in\partial\Omega_{\mathrm{in}}\right\},\vspace{3pt}\\
\ds\ue(x)=\bo+  \int_{\lomega}\gfo(y)\ue(y)\dy,\quad\,x\in\go:=\left\{\lambda z:z\in\partial\Omega_{\mathrm{out}}\right\}.
\end{cases}
\end{align}
The boundary data of $\ue$ on $\gi\cup\go$ satisfies an implicit form with non-local dependence on the unknown $\ue$ in  $\lomega$. Here $\bi$ and $\bo$ are constants, which are independent of $\lambda$, $\gfi,\,\gfo\in\mathrm{C}(\mathbb{R}^N\setminus\{0\};\mathbb{R})$, and the variable coefficients  {$\abar\in\mathrm{C}^{\alpha}(\mathbb{R}^N\setminus\{0\};\mathbb{R}^N)$ and $h\in\mathrm{C}^{\alpha}(\mathbb{R}^N\setminus\{0\};\mathbb{R})$ are H\"{o}lder continuous with exponent~$\alpha\in(0,1)$.} Thus, the restrictions  $\gfi\big|_{\overline{\lomega}}$, $\gfo\big|_{\overline{\lomega}}$, $\abar\big|_{\overline{\lomega}}$ and $h\big|_{\overline{\lomega}}$ are well defined for any $\lambda>0$. The conditions of these functions will be assumed specifically {in \eqref{ab619} and \eqref{gio}}.

Previous research literature \cite{C2005,F2011,L2013,SD-2007}, in which a one-dimensional (1D) case was numerically examined, inspired the equation that we are studying. We refer the readers to \cite{A1986,CA2021} for the details on the relevant qualitative and asymptotic analysis for solutions of singularly perturbed models with different integral-type boundary conditions. Furthermore, several studies have investigated the uniqueness or multiplicity of the solutions to such non-local equations (e.g. see [4] and its references). The primary approach is based on several fixed-point theorems, such as Krasnosel'skii's fixed-point theorem, Schauder fixed-point theorem and the weakly contractive mapping theorem. Different from 1D models, for fixed $\lambda>0$, studying the uniqueness or multiplicity of the solutions of higher-dimensional equations such as (1.2)--(1.3) is a challenge in the analysis because such equations do not have a variational structure.

\begin{figure}[ht]
\includegraphics[width=10.8cm]{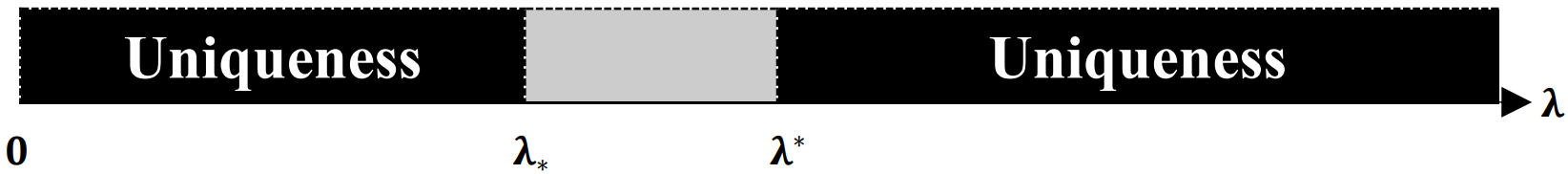}
\caption{The existence, uniqueness and multiplicity of solutions to equation~\eqref{equl}--\eqref{bdu} may be affected by the domain $\lomega$ with respect to the parameter $\lambda$.  In Theorem~\ref{m-thm}, we demonstrate that there exist $0<\lambda_*<\lambda^*$ such that equations~\eqref{equl}--\eqref{bdu} have a unique solution as $\lambda\in(0,\lambda_*)\cup(\lambda^*,\infty)$. However, when $\lambda\in[\lambda_*,\lambda^*]$ (the grey region), the equation may have multiple solutions or no solution. (For clarity, we refer the reader to  Example~\ref{ex1} and Proposition~\ref{m-cor}.)
}\label{fig1}
\end{figure}

 $\lomega$ with $\lambda\to\infty$ is an expanding domain, which is related to the mathematical problems for convection--diffusion models in a reactor of macroscopic length scale (see, e.g. \cite{L2020,S1993}), where the region for a chemical substance to diffuse across is significantly larger compared to that of a reaction process \cite{B1999,Z2012}.  We focus on  convection--diffusion equation~\eqref{equl} satisfying a {\it weak convection effect} or {\it large reaction effect}~(cf. \cite{S1999}).  In the framework of our analysis, both effects can be considered as the following mathematical settings:
 \begin{align}\label{ab619}
 |\abar(x)|^2< 4h(x),\,\,x\in\mathbb{R}^N\setminus\{0\},\,\,\text{and}\,\,\ds\lim_{\lambda\to\infty}\lambda^{\kappa}\min_{\overline{\Omega}}\left(4h(\lambda z)-|\abar(\lambda z)|^2\right)>0\,\,\text{for\,\,some}\,\,\kappa\in(0,2).
\end{align}

Equation~\eqref{equl}--\eqref{bdu} with assumption \eqref{ab619} fulfills some well-known convection--diffusion equations. In particular, \eqref{ab619} includes the case $\abar(x)\equiv\vec{0}$ and $h(x)>0$, which {arises} in numerous applications such as the homogeneous chemical reactions. \eqref{ab619} is also a case when considering a convection--diffusion  equation~\eqref{equl} with a large reaction $h(x)$. For example, $\min\limits_{\overline{\lomega}}h$ is sufficiently large so that $\max\limits_{\overline{\lomega}}|\abar|^2<4\min\limits_{\overline{\lomega}}h$. The latter assumption in \eqref{ab619} includes the case $\ds\min_{\overline{\Omega}}(4h(\lambda z)-|\abar(\lambda z)|^2)\sim\lambda^{-\widehat{\kappa}}\xrightarrow{\lambda\gg1}0$ for $0<\widehat{\kappa}<\kappa$, where the region of $\kappa$ is optimal in our analysis.

Under \eqref{ab619}, equation \eqref{equl} with the standard Dirichlet boundary conditions on $\partial\lomega$ has a unique solution for any $\lambda>0$. However, the situation becomes quite different for the non-local type boundary condition~\eqref{bdu}. The issue about the range of $\lambda$ for the uniqueness of \eqref{equl}--\eqref{bdu} seems to be a challenge. In a special case when $\lomega$ is an annulus, we provide an example to explain that there  exists $\lambda=\lambda^*>0$ such that equation~\eqref{equl}--\eqref{bdu} has infinitely numerous solutions or no solution.

\begin{example}\label{ex1}
 We consider the following equation in the domain~$\mathbb{A}_{\lambda;2\lambda}:=\left\{x\in\mathbb{R}^2:\,\lambda<|x|<2\lambda\right\}$, i.e. the domain $\lomega$ defined in \eqref{domain} with $\Omega_{\mathrm{in}}=\{x:|x|<1\}$ and $\Omega_{\mathrm{out}}=\{x:|x|<2\}$ in $\mathbb{R}^2$:
\begin{align}\label{Buu}
\begin{cases}
\ds-\Delta\ue(x)+\frac{x}{|x|^2}\cdot\nabla\ue(x)+\ue(x)=0,\quad\,x\in\mathbb{A}_{\lambda;2\lambda},\vspace{6pt}\\
    \ds\ue(x)= \bi+ \mathfrak{g}\int_{\mathbb{A}_{\lambda;2\lambda}}\ue(y)\dy,\quad\mathrm{for}\quad|x|=\lambda;\qquad\ue(x)= 0,\quad\mathrm{for}\quad|x|=2\lambda,
\end{cases}
\end{align}
where $\mathfrak{g}>0$ is a constant.  We shall apply  Fourier expansions to demonstrate that all solutions~$\ue$ of \eqref{Buu} are radially symmetric. For $x=r\mathrm{e}^{\sqrt{-1}\theta}$ with $\lambda<r<2\lambda$ and $0\leq\theta<2\pi$, we let 
\[\ue(r\mathrm{e}^{\sqrt{-1}\theta})=\frac{1}{2}u_{1,0}(r)+\sum_{k=1}^\infty \left(u_{1,k}(r)\cos(k\theta)+u_{2,k}(r)\sin(k\theta)\right),\]
where
\[u_{1,k}(r):=\frac{1}{\pi}\int_{-\pi}^{\pi} \ue (r\mathrm{e}^{\sqrt{-1}\theta})\cos(k\theta)\,\mathrm{d}\theta, \quad  u_{2,k}(r):=\frac{1}{\pi}\int_{-\pi}^{\pi} \ue (r\mathrm{e}^{\sqrt{-1}\theta})\sin(k\theta)\,\mathrm{d}\theta.\] 
Then, one may check that for $i=1,2$, and $k\in\mathbb{N}$, $u_{i,k}$  satisfies the  equation
\[ u_{i,k}''(r)-\left(1+\frac{k^2}{r^2}\right)u_{i,k}(r)=0\,\,\mathrm{in}\,\,(\lambda,2\lambda), \quad u_{i,k}(\lambda)=u_{i,k}(2\lambda)=0. \]
Such an equation only has a trivial solution $u_{i,k}(r) \equiv 0$, and we obtain that $\ue(x)=\frac{1}{2}u_{1,0}(|x|)$ is radially symmetric. As a consequence, we can solve \eqref{Buu}  and verify the non-existence, uniqueness and multiplicity of the solutions  $\ue$'s corresponding to $\bi$ and $\lambda$. More accurately, we completely classify the solutions of \eqref{Buu} as follows:
\begin{itemize}
 \item[\bf (i).] When $\bi=0$ and $\lambda^*:=\lambda^*(\mathfrak{g})>0$ satisfies\footnote{Let $f(\lambda)= \left(\frac{1+\lambda}{2}-\frac{1}{4\mathfrak{g}\pi}\right)\frac{\mathrm{e}^{2\lambda}-1}{\lambda}-2(\mathrm{e}^{\lambda}-1)$. Since $\mathfrak{g}>0$, we have $\lim_{\lambda\to0+}f(\lambda)=1-\frac{1}{2\mathfrak{g}\pi}<1$ and $\lim_{\lambda\to\infty}f(\lambda)=\infty$. Along with the intermediate value theorem of continuous functions, we verify that \eqref{alg-eq} has a positive root $\lambda^*=\lambda^*(\mathfrak{g})$.}
\begin{align}\label{alg-eq}
    \left(\frac{1+\lambda^*}{2}-\frac{1}{4\mathfrak{g}\pi}\right)\frac{\mathrm{e}^{2\lambda^*}-1}{\lambda^*}-2(\mathrm{e}^{\lambda^*}-1)=1,
\end{align}
 for any constant $c^*$, all $u_{\lambda^*,c^*}(x)=c^*\left(\mathrm{e}^{|x|}-\mathrm{e}^{4\lambda^*-|x|}\right)$ are solutions of \eqref{Buu} with $\lambda=\lambda^*$. 
\item[\bf (ii).] When $\bi \ne 0$ and $\lambda=\lambda^*$, \eqref{Buu} has no solution.
\item[\bf (iii).] If $\lambda>0$ and $\lambda\neq\lambda^*$, then for each $\bi\in\mathbb{R}$, \eqref{Buu} has a unique solution. 
%In particular, $\bi=0$ implies the trivial solution $\ue(x)\equiv0$.
\end{itemize}
\end{example}
Simple calculations can be employed to verify this result. We thus omit the details here.  For general cases, the issues of the non-existence, uniqueness, and multiplicity of solutions of \eqref{equl}--\eqref{bdu} become more complicated; see Proposition~\ref{m-cor}. What we shall point out is that the uniqueness of solutions to \eqref{equl}--\eqref{bdu} in $\lomega$ defined by \eqref{domain} exactly depends variously on $\lambda$, $\bi$, and $\bo$.

Additionally, to obtain a more detailed uniqueness result of \eqref{equl}--\eqref{bdu} with large $\lambda$, for $\gfi$ and $\gfo$, we made the following precise assumption:
\begin{align}\label{gio}
\begin{cases}
 \ds\lim_{\lambda\to\infty}\lambda^{N+\frac{\kappa-2}{2}}\max_{z\in\overline{\Omega}}|g(\lambda z)|=0,\\
 \ds\lim_{|z|\to0}|z|^Ng(z)=0,
\end{cases}
 \,\,\text{for}\,\,g=\gfi,\,\gfo,
\end{align}
where~$\kappa\in(0,2)$ has been defined in \eqref{ab619}. Assumption~\eqref{gio} allows  $g$ to blow up at $z=0$. An example for \eqref{gio} is $g(z)=|z|^{-N+\frac{2-{\kappa}^*}{2}}$ with ${\kappa}^*\in(\kappa,2)$.  Although \eqref{gio} implies $\left|\int_{\lomega}g(y)\dy\right|=\lambda^N\left|\int_{\Omega}g(\lambda z)\,\dz\right|\ll\lambda^{\frac{2-\kappa}{2}}$  as $\lambda\to\infty$,  we emphasise that it also includes the case $\lim\limits_{\lambda\to\infty}\left|\int_{\lomega}g(y)\dy\right|=\infty$. 

\subsection{The main result}
 We introduce the equations corresponding  to \eqref{equl} with the standard Dirichlet boundary conditions before stating the main results. Let $\phl$ be a solution of the equation
\begin{align}\label{eqphl}
\begin{cases}
-\Delta\phl(x)+\abar(x)\cdot\nabla\phl(x)+h(x)\phl(x)=0\quad\mathrm{in}\quad \lomega,\\
\phl=1\,\,\mathrm{on}\,\,\gi,\qquad\phl=0\,\,\mathrm{on}\,\,\go,
\end{cases}
\end{align}
and let $\psl$ be a solution of the equation
\begin{align}\label{eqpsl}
\begin{cases}
-\Delta\psl(x)+\abar(x)\cdot\nabla\psl(x)+h(x)\psl(x)=0\quad\mathrm{in}\quad \lomega,\\
\psl=0\,\,\mathrm{on}\,\,\gi,\qquad\psl=1\,\,\mathrm{on}\,\,\go.
\end{cases}
\end{align}
Note that $\lomega$ is a bounded smooth domain for each fixed $\lambda>0$, and the constraints  $\abar\big|_{\overline{\lomega}}$ and $h\big|_{\overline{\lomega}}$ are H\"{o}lder continuous with exponent $\alpha\in(0,1)$. As a consequence, by the standard elliptic regularity theory (cf. \cite[Theorem~6.14]{GT2001}), we obtain that the equations \eqref{eqphl} and \eqref{eqpsl} have unique solutions~$\phl,\,\psl\in\text{C}^{2,\alpha}(\overline{\lomega})$, where the uniqueness is trivially due to  $h\big|_{\overline{\lomega}}>0$.

Let $c_1$ and $c_2$ be constants. Then, by the uniqueness of {solutions of}  \eqref{eqphl} and \eqref{eqpsl}, the solution of equation~\eqref{equl} with boundary conditions $\ue=c_1$ on $\gi$ and $\ue=c_2$ on $\go$ is uniquely expressed by $\ue=c_1\phl+c_2\psl$.  Accordingly,  for each solution $\ue$ of \eqref{equl}--\eqref{bdu}  (if it exists), we have an implicit representation for $\ue$ given by
\begin{align}\label{imp-u}
      \ue(x)=\left(\bi+  \int_{\lomega}\gfi(y)\ue(y)\dy\right)\phl(x)+\left(\bo+  \int_{\lomega}\gfo(y)\ue(y)\dy\right)\psl(x),\quad\,x\in\overline{\lomega}.
  \end{align}
Here we perform an argument that is different from the fixed point approach. We will show that {the} two non-local coefficients (depending on unknown $\ue$) in the right-hand side of \eqref{imp-u} can be explicitly  expressed by $\phl$ and $\psl$. In Proposition~\ref{m-cor}, we establish a criterion of the existence and uniqueness for \eqref{equl}--\eqref{bdu}.
\begin{proposition}\label{m-cor}
Let $\lomega$ be defined by \eqref{domain}. Assume that $\bi$ and $\bo$ are constants independent of $\lambda$, {$\gfi,\,\gfo\in\mathrm{C}(\mathbb{R}^N\setminus\{0\};\mathbb{R})$ satisfy \eqref{gio}, and $\abar\in\mathrm{C}^{\alpha}(\mathbb{R}^N\setminus\{0\};\mathbb{R}^N)$ and $h\in\mathrm{C}^{\alpha}(\mathbb{R}^N\setminus\{0\};\mathbb{R})$ satisfy \eqref{ab619}, where $\alpha\in(0,1)$.} Then we have the following:
\begin{itemize}
    \item[(i)] If $\lambda$ satisfies $\det(\mathcal{I}- \mathcal{R}_{\lambda})\neq0$, then \eqref{equl}--\eqref{bdu} has a unique solution  satisfying
  \begin{align}\label{sol-u}
%\doublebox{
%$\,\,\,\ue(x)=\boldsymbol{\det\left((\mathcal{I}-\mathcal{R}_{\lambda})^{-1}\mathcal{C}_{\psl})\right)}\phl(x)+\boldsymbol{\det\left((\mathcal{I}-\mathcal{R}_{\lambda})^{-1}\mathcal{C}_{\phl})\right)}\psl(x),\,\,\,$}
\fbox{\begin{minipage}{316pt}
%{29em}
$\,\,\,\ue(x)={\det\left((\mathcal{I}-\mathcal{R}_{\lambda})^{-1}\mathcal{C}_{\psl}\right)}\phl(x)+{\det\left((\mathcal{I}-\mathcal{R}_{\lambda})^{-1}\mathcal{C}_{\phl}\right)}\psl(x),$
\end{minipage}}
  \end{align}
 where
\begin{align}\label{big-i-a}
\mathcal{I}=\begin{bmatrix}
1 & 0 \vspace{3pt}\\
0 & 1
\end{bmatrix},\,\,
   \mathcal{R}_{\lambda}=   \begin{bmatrix}
\ds\int_{\lomega}\gfi(y)\phl(y)\dy& \ds\int_{\lomega}\gfi(y)\psl(y)\dy \vspace{3pt}\\
\ds\int_{\lomega}\gfo(y)\phl(y)\dy & \ds\int_{\lomega}\gfo(y)\psl(y)\dy 
\end{bmatrix}
\end{align}
and
\begin{align}\label{big-ii}
\mathcal{C}_{\psl}= \begin{bmatrix}
\bi& \ds\quad-  \int_{\lomega}\gfi(y)\psl(y)\dy \vspace{3pt}\\
\bo & \ds1-  \int_{\lomega}\gfo(y)\psl(y)\dy 
\end{bmatrix},\,\,
   \mathcal{C}_{\phl}= \begin{bmatrix}
\ds1-  \int_{\lomega}\gfi(y)\phl(y)\dy& \bi \vspace{3pt}\\
\ds\quad-  \int_{\lomega}\gfo(y)\phl(y)\dy & \bo 
\end{bmatrix}.
\end{align} 
    \item[(ii)] If $\lambda$, $\bi$, and $\bo$ satisfy $\det(\mathcal{I}- \mathcal{R}_{\lambda})=0$ and $\det\mathcal{C}_{\psl}=0$, then \eqref{equl}--\eqref{bdu} has infinitely many solutions.
    \item[(iii)] If $\lambda$, $\bi$, and $\bo$ satisfy  $\det(\mathcal{I}- \mathcal{R}_{\lambda})=0$ and $\det\mathcal{C}_{\psl}\neq0$, then \eqref{equl}--\eqref{bdu} has no solution.
\end{itemize}
\end{proposition}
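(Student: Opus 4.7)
The plan is to reduce the integral-boundary value problem \eqref{equl}--\eqref{bdu} to a $2\times 2$ algebraic system, exploiting the fact that the two non-local coefficients appearing in \eqref{imp-u} are constants that do not depend on $x$. Since assumption \eqref{ab619} forces $h>0$ on $\overline{\lomega}$, the Schauder theory cited in the excerpt gives unique classical solutions $\phl,\psl\in\mathrm{C}^{2,\alpha}(\overline{\lomega})$ of the auxiliary problems \eqref{eqphl}--\eqref{eqpsl}. For any solution $\ue$ of \eqref{equl}--\eqref{bdu}, both sides of \eqref{imp-u} satisfy the same equation \eqref{equl} with the same Dirichlet data on $\gi\cup\go$, so uniqueness for the Dirichlet problem forces \eqref{imp-u} to hold pointwise on $\overline{\lomega}$.

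Next I would introduce the two scalars
\[
A := \bi + \int_{\lomega}\gfi(y)\ue(y)\dy,\qquad B := \bo + \int_{\lomega}\gfo(y)\ue(y)\dy,
\]
so that \eqref{imp-u} becomes $\ue = A\phl+B\psl$. Substituting this expression back into the defining integrals of $A$ and $B$ collapses the problem to the linear system
\[
(\mathcal{I}-\mathcal{R}_{\lambda})\begin{bmatrix}A\\B\end{bmatrix}=\begin{bmatrix}\bi\\\bo\end{bmatrix},
\]
where the matrix $\mathcal{R}_{\lambda}$ in \eqref{big-i-a} collects the four integrals $\int_{\lomega}\gfi\phl$, $\int_{\lomega}\gfi\psl$, $\int_{\lomega}\gfo\phl$, $\int_{\lomega}\gfo\psl$. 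Conversely, any $(A,B)$ solving this system produces a solution $\ue=A\phl+B\psl$ of \eqref{equl}--\eqref{bdu} by a direct check, so the solutions of \eqref{equl}--\eqref{bdu} correspond bijectively with the solution pairs of the algebraic system.

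Part (i) is then Cramer's rule: when $\det(\mathcal{I}-\mathcal{R}_{\lambda})\neq 0$, the unique pair is $A=\det\mathcal{C}_{\psl}/\det(\mathcal{I}-\mathcal{R}_{\lambda})$ and $B=\det\mathcal{C}_{\phl}/\det(\mathcal{I}-\mathcal{R}_{\lambda})$, because $\mathcal{C}_{\psl}$ and $\mathcal{C}_{\phl}$ defined in \eqref{big-ii} are precisely the Cramer replacements of the first and second columns of $\mathcal{I}-\mathcal{R}_{\lambda}$ by the right-hand side $(\bi,\bo)^{T}$; the identity $\det((\mathcal{I}-\mathcal{R}_{\lambda})^{-1}\mathcal{C}_{\bullet})=\det\mathcal{C}_{\bullet}/\det(\mathcal{I}-\mathcal{R}_{\lambda})$ rewrites this in the form claimed in \eqref{sol-u}. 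For parts (ii) and (iii), with $\det(\mathcal{I}-\mathcal{R}_{\lambda})=0$, I would invoke the Rouch\'e--Capelli criterion: the two quantities $\det\mathcal{C}_{\psl}$ and $\det\mathcal{C}_{\phl}$ are, up to sign, the non-trivial $2\times 2$ minors of the augmented matrix $[\,\mathcal{I}-\mathcal{R}_{\lambda}\,|\,(\bi,\bo)^{T}\,]$. A short case analysis, using that the columns of $\mathcal{I}-\mathcal{R}_{\lambda}$ are linearly dependent, shows $\det\mathcal{C}_{\psl}=0$ if and only if $\det\mathcal{C}_{\phl}=0$. Hence the system is either consistent with a one-parameter family of solutions (yielding infinitely many $\ue$ via the bijection, giving (ii)) or inconsistent (giving (iii)).

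The main conceptual step, and the place a reader may want details, is the reduction itself: justifying that \eqref{imp-u} captures \emph{every} solution and that the two ``integral constants'' genuinely decouple into a finite-dimensional system. This rests on the unique solvability of the Dirichlet problem for \eqref{equl}, which in turn uses $h>0$ enforced by \eqref{ab619}. Beyond that, parts (ii)--(iii) amount to a standard consistency check for a singular $2\times 2$ linear system.
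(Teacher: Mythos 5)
Your reduction is exactly the paper's argument: set $B_{\lambda,\mathrm{i}}=\bi+\int_{\lomega}\gfi\ue$, $B_{\lambda,\mathrm{o}}=\bo+\int_{\lomega}\gfo\ue$, use unique solvability of the Dirichlet problem (from $h>0$) to get $\ue=B_{\lambda,\mathrm{i}}\phl+B_{\lambda,\mathrm{o}}\psl$, substitute back to obtain $(\mathcal{I}-\mathcal{R}_{\lambda})(B_{\lambda,\mathrm{i}},B_{\lambda,\mathrm{o}})^{T}=(\bi,\bo)^{T}$, and apply Cramer's rule for part (i). This matches the paper's proof of (i) step for step, including the identification of $\mathcal{C}_{\psl}$, $\mathcal{C}_{\phl}$ as the Cramer replacement matrices and the identity $\det((\mathcal{I}-\mathcal{R}_{\lambda})^{-1}\mathcal{C}_{\bullet})=\det\mathcal{C}_{\bullet}/\det(\mathcal{I}-\mathcal{R}_{\lambda})$. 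Your justification that \eqref{imp-u} captures every solution, and that every solution of the algebraic system conversely yields a solution of \eqref{equl}--\eqref{bdu}, is the right (and in the paper only implicit) bijection.

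For parts (ii)--(iii), however, the asserted lemma ``when the columns of $\mathcal{I}-\mathcal{R}_{\lambda}$ are linearly dependent, $\det\mathcal{C}_{\psl}=0$ if and only if $\det\mathcal{C}_{\phl}=0$'' is false for a general singular $2\times2$ system. Take
\begin{equation*}
\mathcal{I}-\mathcal{R}_{\lambda}=\begin{bmatrix}1&0\\0&0\end{bmatrix},\qquad (\bi,\bo)^{T}=(0,1)^{T},
\end{equation*}
i.e.\ the (admittedly degenerate) configuration $\int_{\lomega}\gfi\phl=\int_{\lomega}\gfo\phl=\int_{\lomega}\gfi\psl=0$ and $\int_{\lomega}\gfo\psl=1$. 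Then $\det\mathcal{C}_{\psl}=\det\begin{bmatrix}0&0\\1&0\end{bmatrix}=0$ while $\det\mathcal{C}_{\phl}=\det\begin{bmatrix}1&0\\0&1\end{bmatrix}=1$, and the system $B_{\lambda,\mathrm{i}}=0$, $0=1$ is inconsistent. The correct Rouch\'e--Capelli statement is that consistency is equivalent to $\operatorname{rank}[\,\mathcal{I}-\mathcal{R}_{\lambda}\mid(\bi,\bo)^{T}\,]=\operatorname{rank}(\mathcal{I}-\mathcal{R}_{\lambda})$, which requires the vanishing of \emph{both} minors $\det\mathcal{C}_{\psl}$ and $\det\mathcal{C}_{\phl}$ (plus a separate treatment of the rank-zero case); the single condition $\det\mathcal{C}_{\psl}=0$ characterizes consistency only when the second column of $\mathcal{I}-\mathcal{R}_{\lambda}$ is nonzero. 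In the generic situation that the paper clearly has in mind (and certainly for $\lambda$ in the regimes of Theorem~\ref{m-thm}, where the off-diagonal and diagonal perturbations are small) this column is nonzero and your case analysis goes through, so the gap is confined to a degenerate configuration that the paper itself glosses over with ``similarly''. Still, you should either prove that this configuration cannot occur under \eqref{ab619} and \eqref{gio}, or replace the claimed equivalence by the correct rank condition; as written, that one step would fail.
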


Having Proposition~\ref{m-cor} at hand, we are in a position to state the main result.

\begin{theorem}\label{m-thm}
Under the same assumptions as in Proposition~\ref{m-cor},  there exist $0<\lambda_*<\lambda^*$ such that for each $\lambda\in(0,\lambda_*)\cup(\lambda^*,\infty)$, we have $\det(\mathcal{I}- \mathcal{R}_{\lambda})>0$, and \eqref{equl}--\eqref{bdu} has a unique solution~$\ue$ satisfying~\eqref{sol-u}. Moreover, as $\lambda\gg1$, there exist positive constants $C^*$ and $M^*$ independent of $\lambda$ such that
  \begin{align}\label{u-in-e}
|\ue(x)|\leq\,C^*\exp\left(-M^*\lambda^{-\frac{\kappa}{2}}\mathrm{dist}(x,{\partial\lomega})\right),\quad\mathrm{for\,\,all}\,\,x\in   \overline{\lomega}.   
\end{align}
\end{theorem}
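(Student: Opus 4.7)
\medskip
\noindent\textbf{Proof plan.} The strategy is to show $\det(\mathcal{I} - \mathcal{R}_{\lambda}) > 0$ in both regimes $\lambda \ll 1$ and $\lambda \gg 1$, after which Proposition~\ref{m-cor}(i) delivers existence, uniqueness, and the formula \eqref{sol-u}. Since $h > |\abar|^2/4 \ge 0$, the weak maximum principle forces $0 \le \phl, \psl \le 1$, so each entry of $\mathcal{R}_{\lambda}$ is dominated in modulus by $\int_{\lomega}|\gfi|\,\dy$ or $\int_{\lomega}|\gfo|\,\dy$. It therefore suffices to prove that these two integrals vanish at both endpoints; by continuity of $\lambda \mapsto \mathcal{R}_{\lambda}$ on $(0,\infty)$, this forces $\det(\mathcal{I}-\mathcal{R}_{\lambda}) \to 1$ at $\lambda \to 0^+$ and $\lambda \to \infty$, and the thresholds $\lambda_*, \lambda^*$ follow.

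For $\lambda \ll 1$, a change of variables yields $\int_{\lomega}|g(y)|\,\dy = \lambda^N \int_{\Omega}|g(\lambda z)|\,\dz$ for $g \in \{\gfi,\gfo\}$. The second condition in \eqref{gio} implies that for each $\varepsilon>0$ there is $\delta>0$ with $|g(\zeta)|<\varepsilon|\zeta|^{-N}$ whenever $|\zeta|<\delta$; since $\overline\Omega$ is compact and does not contain the origin, $\lambda z$ lies in this region uniformly in $z\in\overline\Omega$ for $\lambda$ small, giving $\lambda^N|g(\lambda z)| \le \varepsilon|z|^{-N}$, which is integrable on $\Omega$. Sending $\varepsilon\to 0$ proves $\int_{\lomega}|g|\,\dy \to 0$ as $\lambda \to 0^+$.

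For $\lambda \gg 1$, the key ingredient is the exponential-decay estimate
\begin{align*}
\phl(x) \le e^{-\mu_\lambda\,\mathrm{dist}(x,\gi)},\qquad \psl(x)\le e^{-\mu_\lambda\,\mathrm{dist}(x,\go)},\qquad \mu_\lambda := \beta\,\lambda^{-\kappa/2},
\end{align*}
for some $\beta > 0$ independent of $\lambda$. I would prove this by the comparison principle with barrier $w(x) = \exp(-\mu_\lambda\tilde\rho(x))$, where $\tilde\rho$ is a smooth approximation of $\mathrm{dist}(\cdot,\gi)$ satisfying $|\nabla\tilde\rho|\le 1$ and $|\Delta\tilde\rho| = O(1/\lambda)$ in a tubular collar (the latter because the principal curvatures of $\gi=\{\lambda z:z\in\partial\Omega_{\mathrm{in}}\}$ scale as $1/\lambda$). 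A direct computation gives
\begin{align*}
L w = w\bigl\{h - \mu_\lambda^2|\nabla\tilde\rho|^2 - \mu_\lambda\abar\cdot\nabla\tilde\rho + \mu_\lambda\Delta\tilde\rho\bigr\}\ge w\bigl\{(h-|\abar|^2/4) - 2\mu_\lambda^2 - \mu_\lambda|\Delta\tilde\rho|\bigr\},
\end{align*}
where the last step completes the square via $|\abar|^2/4 - \mu_\lambda|\abar| = (|\abar|/2 - \mu_\lambda)^2 - \mu_\lambda^2\ge -\mu_\lambda^2$. Using \eqref{ab619} to write $h-|\abar|^2/4 \ge c_0\lambda^{-\kappa}$ on $\overline{\lomega}$ for large $\lambda$, and noting $\mu_\lambda|\Delta\tilde\rho| = O(\lambda^{-1-\kappa/2}) = o(\lambda^{-\kappa})$ precisely because $\kappa<2$, a choice $\beta<\sqrt{c_0/2}$ makes $Lw \ge 0$. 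Since $w \ge 1 = \phl$ on $\gi$ and $w \ge 0 = \phl$ on $\go$, comparison yields $\phl \le w$; the bound on $\psl$ is symmetric. With this in hand,
\begin{align*}
\int_{\lomega}|g|\,\phl\,\dy \le \max_{\overline{\lomega}}|g|\cdot \int_{\lomega} e^{-\mu_\lambda\mathrm{dist}(y,\gi)}\dy = O\!\left(\max_{\overline{\lomega}}|g|\cdot\lambda^{N-1}\,\mu_\lambda^{-1}\right) = O\!\left(\max_{\overline{\lomega}}|g|\cdot\lambda^{N-1+\kappa/2}\right),
\end{align*}
and the first condition in \eqref{gio} makes this $o(1)$ as $\lambda \to \infty$. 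The remaining three entries of $\mathcal{R}_{\lambda}$ are controlled identically.

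For estimate \eqref{u-in-e}, I write $\ue = A_\lambda\phl + B_\lambda\psl$ from \eqref{sol-u}; since $\mathcal{R}_\lambda\to 0$ and $\mathcal{C}_{\phl},\mathcal{C}_{\psl}$ stabilize, the scalars $A_\lambda,B_\lambda$ are uniformly bounded for $\lambda\gg 1$. Using $\phl(x) \le e^{-\mu_\lambda\mathrm{dist}(x,\gi)} \le e^{-\mu_\lambda\mathrm{dist}(x,\partial\lomega)}$ and the analogous bound for $\psl$ produces \eqref{u-in-e} with $M^* = \beta$. The principal technical obstacle is the rigorous construction of $\tilde\rho$ and the barrier argument across the medial axis of $\lomega$, where $\mathrm{dist}(\cdot,\gi)$ fails to be $C^2$; the remedy I envisage is to run the barrier only in a tubular collar of each boundary component (where the curvature scaling $O(1/\lambda)$ is genuine) and to bound $\phl$ in the interior by its supremum on the collar's inner boundary times a constant, which together give the global bound after routine patching.
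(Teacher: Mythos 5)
Your overall architecture coincides with the paper's: the small-$\lambda$ regime is handled exactly as in the paper (change of variables plus the second condition in \eqref{gio}, which is the content of Corollary~\ref{cor-ch} combined with \eqref{int-gg}), the large-$\lambda$ regime rests on an exponential decay estimate for $\phl,\psl$ fed into a coarea/layer-cake computation identical to \eqref{feb-0}--\eqref{feb}, and \eqref{u-in-e} follows from boundedness of the Cramer coefficients exactly as in the paper. The one place where you genuinely diverge is the derivation of the decay estimate \eqref{ph-*}, and that is where there is a gap. Your barrier is $w=e^{-\mu_\lambda\tilde\rho}$ with $\tilde\rho\approx\mathrm{dist}(\cdot,\gi)$. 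The algebra ($|\nabla\tilde\rho|\le 1$, completion of the square absorbing $\abar$, $2\mu_\lambda^2=2\beta^2\lambda^{-\kappa}$ versus $h-|\abar|^2/4\gtrsim\lambda^{-\kappa}$, and $\mu_\lambda|\Delta\tilde\rho|=O(\lambda^{-1-\kappa/2})=o(\lambda^{-\kappa})$ since $\kappa<2$) is correct inside the reach of $\gi$. But across the medial axis the distance function has a concave kink, so $-\mu_\lambda\tilde\rho$ and hence $w$ acquire a convex (V-shaped) kink; there $-\Delta w$ carries a negative singular part, so $w$ fails to be a supersolution in any distributional or viscosity sense, and no mollification of $\mathrm{dist}$ restores the needed \emph{lower} bound on $\Delta\tilde\rho$. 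This cannot be waved away as a routine patching issue, because the decay you need takes place over distances of order $\lambda$, i.e.\ well beyond the reach.

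Your proposed remedy is circular: running the barrier only in a collar $\{\tilde\rho<T\}$ requires the comparison function to dominate $\phl$ on the inner collar boundary $\Sigma=\{\tilde\rho=T\}$, so the best it can give is $\phl\le e^{-\mu_\lambda\tilde\rho}+\sup_\Sigma\phl$; meanwhile the interior maximum principle (using $h>0$) only says $\sup_{\{\tilde\rho\ge T\}}\phl\le\sup_\Sigma\phl$. Neither step produces smallness of $\sup_\Sigma\phl$, which is precisely the quantity to be estimated. The paper's fix is to rescale to the fixed domain $\Omega$ and replace the distance function by the principal Dirichlet eigenfunction $\varphi_1$ of $-\Delta$ on $\Omega$: the supersolution $\mathcal{V}_\lambda=\exp(-M_1\lambda^{1-\kappa/2}\varphi_1)$ in \eqref{def-Upsilon} is globally smooth (its Laplacian is computed exactly from $-\Delta\varphi_1=\mu_1\varphi_1$, with no medial-axis issue), it is compared with $\Phi_\lambda^2$ rather than $\Phi_\lambda$ (the squaring plays the role of your completion of the square), and Hopf's lemma supplies $\varphi_1\gtrsim\mathrm{dist}(\cdot,\partial\Omega)$ to convert the result into \eqref{Upsilon-y} and hence \eqref{ph-*}. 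If you want to keep a physical-space argument, you must either substitute a globally smooth function comparable to the distance with a one-sided Laplacian bound (the eigenfunction is the natural choice), or replace the single barrier by an iteration over a chain of subdomains of diameter $O(\lambda^{\kappa/2})$ on each of which the zeroth-order term forces a fixed multiplicative decay. As written, the decay estimate --- and with it the large-$\lambda$ half of the theorem and \eqref{u-in-e} --- is not established.
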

Figure~\ref{fig1} is a fundamental understanding of Theorem~\ref{m-thm}. In Section~\ref{sec-pf}, we will prove Proposition~\ref{m-cor} and Theorem~\ref{m-thm}. 

\begin{remark}
\eqref{u-in-e} shows that for an interior point $x\in\Omega_{\lambda}$ satisfying $\lim\limits_{\lambda\to\infty}\mathrm{dist}(\frac{x}{\lambda},{\partial\Omega})>0$,  $\lim\limits_{\lambda\to\infty}|\ue(x)|=0$ holds because for $\kappa\in(0,2)$,
\begin{align*}
    |\ue(x)|\leq\,C^*\exp\left(-M^*\lambda^{-\frac{\kappa}{2}}\mathrm{dist}(x,{\partial\lomega})\right)=C^*\exp\left(-M^*\lambda^{1-\frac{\kappa}{2}}\mathrm{dist}(\frac{x}{\lambda},{\partial\Omega})\right)\ll1.
\end{align*}
\end{remark}

\begin{remark}[The ${H}^1$-capacity of $\lomega$]
Inspired by \eqref{sol-u}, which presents a linear combination of $\phl$ and $\psl$ for solution~$\ue$, the ${H}^1$-capacity of $\lomega$  is naturally considered since it is a relevant measure of the ``size'' for $\lomega$ supporting Dirichlet boundary conditions   (cf. \cite{BM2006,M1985}). (Whereas notice  the boundary conditions of $\phl$ and $\psl$ in \eqref{eqphl} and \eqref{eqpsl}.) Recall the ${H}^1$-capacity of $\lomega$:
\begin{equation}\label{cap-e}
 \mathrm{cap}(\lomega)=\min\left\{\int_{\lomega}|\nabla U|^2\dx:\,U\in {H}^1(\lomega),\,\,U=1\,\,\mathrm{on}\,\,\gi,\,\,U=0\,\,\mathrm{on}\,\,\go\right\},  
\end{equation}
where the minimum is achieved when $U$ is harmonic in $\lomega$. In particular, $\lomega$ has the~property
\begin{equation*}
   \mathrm{cap}(\lomega)=\lambda^{N-2}\mathrm{cap}(\Omega). 
\end{equation*}
 When $N=2$, $\mathrm{cap}(\lomega)$ is independent of $\lambda$. As a consequence, the effect of $\mathrm{cap}(\lomega)$ on the existence and uniqueness of the solutions of the equation~\eqref{equl}--\eqref{bdu} appears insignificant. However, when $N\geq3$, Theorem~\ref{m-thm} can be reinterpreted that there exist $0<L_*<L^*$ depending primarily on $\mathrm{cap}(\Omega)$ and dimension $N$ such that if $\mathrm{cap}(\lomega)\in(0,L_*)\cup(L^*,\infty)$, the equation~\eqref{equl}--\eqref{bdu} has a unique solution. For $\lomega$ replaced with the general domains, the {\bf domain capacity effect} on the uniqueness issue of \eqref{equl}--\eqref{bdu} seems more subtle. The main difficulty, in our perspective, lies in obtaining the refined estimates (with respect to $\lambda$) of the non-local terms in \eqref{bdu}. The general theory for this problem is highly nontrivial, which is a direction of our research in the  future.
\end{remark}

Theorem~\ref{m-thm} focuses  primarily on the uniqueness result of  equation~\eqref{equl}--\eqref{bdu} in the annular-like domains $\lomega$ defined by \eqref{domain}. We shall stress that the argument can straightforwardly be generalized to the same equation as \eqref{equl} in a bounded domain with several boundary components (see Theorem~\ref{thm2} in Section~\ref{sec-mb}).

\subsection{An example contrasting to Theorem~\ref{m-thm}}
In this section, we will emphasise that the property of the domain~$\lomega$ plays a critical role in the uniqueness result presented in Theorem~\ref{m-thm}.

 Proposition~\ref{m-cor} presents that the equation~\eqref{equl}--\eqref{bdu} has a unique solution if and only if $\mathcal{I}-\mathcal{R}_{\lambda}$ is invertible, where  all elements in $\mathcal{R}_{\lambda}$ are associated with the integral-type boundary condition~\eqref{bdu}, which involves the domain $\lomega$. Note that the result of  Proposition~\ref{m-cor} still holds for general expanding domains because the property of the domain~$\lomega$ is not required in the proof (cf. Section~\ref{sec-3_1}). We emphasise that Theorem~\ref{m-thm} holds for the domain $\lomega$ defined by \eqref{domain}, but it may not hold for the general expanding domains. We would provide an example to show that Theorem~\ref{m-thm} may not hold when the expanding domain has a fixed inner boundary that is independent of $\lambda$.

 For a more detailed explanation, let us recall Example~\ref{ex1}, where all domains~$\mathbb{A}_{\lambda;2\lambda}=\{x\in\mathbb{R}^2:\,\lambda<|x|<2\lambda\}$ have the ${H}^1$-capacity $\text{cap}\left(\mathbb{A}_{\lambda;2\lambda}\right)=\frac{2\pi}{\ln2}$, which is independent of $\lambda>0$ (see \eqref{cap-e} and the footnote\footnote{We refer the readers to \cite{BM2006} (see also, \eqref{cap-e} with $N=2$) for the ${H}^1$-capacity of annular domains in $\mathbb{R}^2$. Let $\mathbb{A}_{R_1;R_2}:=\{x\in\mathbb{R}^2:\,R_1<|x|<R_2\}$. Then, the ${H}^1$-capacity of $\mathbb{A}_{R_1;R_2}$ is $\text{cap}\left(\mathbb{A}_{R_1;R_2}\right)=\frac{2\pi}{\ln(R_2/R_1)}$ which can be obtained when $U(x)=\frac{\ln{R_2}-\ln|x|}{\ln{R_2}-\ln{R_1}}$, $x\in\overline{\mathbb{A}_{R_1;R_2}}$.}). In this case,  $\lambda=\lambda^*$ (see \eqref{alg-eq}) for the non-existence/non-uniqueness of the solutions of equation~\eqref{Buu} depends mainly on the coefficient $\mathfrak{g}$ of the integral-type boundary condition. In the following Example~\ref{ex2}, we adopt the same equation as  in Example~\ref{ex1}, but the domain $\mathbb{A}_{\lambda;2\lambda}$ is replaced by the annular domain $\mathbb{A}_{1;\lambda}:=\{x\in\mathbb{R}^2:\,1<|x|<\lambda\}$ with an inner boundary independent of $\lambda$. A difference between $\mathbb{A}_{\lambda;2\lambda}$ and $\mathbb{A}_{1;\lambda}$ with $\lambda$ varying comes from the fact that $\text{cap}\left(\mathbb{A}_{1;\lambda}\right)=\frac{2\pi}{\ln\lambda}$ depends on $\lambda>1$. Because the inner boundary of $\mathbb{A}_{1;\lambda}$ is fixed, assumption~\eqref{gio} is not effective for $g=\gfi$ on $|x|=1$. This observation prompts us to consider the boundary condition $\ue=\bi+\int_{\mathbb{A}_{1;\lambda}}\gfi(y)\ue(y)\,\dy$ with a precise $\gfi$ (see \eqref{moon-c0}) on the inner boundary $|x|=1$. We will show in Case~2(II) of Example~\ref{ex2}  that when $\bi=0$ ($\bi\neq0$, respectively), there exists a strictly increasing sequence $\{\lambda_k\}_{k\in\mathbb{N}}$ with $\lambda_k\xrightarrow{k\to\infty}\infty$ such that the equation in the domain $\mathbb{A}_{1;\lambda_k}$ has infinitely many solutions (no solution, respectively). 
\begin{example}\label{ex2}
Let $N=2$. For $\lambda > 1$, we consider the equation
\begin{align}\label{Buu1}
\begin{cases}
\ds-\Delta\ue(x)+\frac{x}{|x|^2}\cdot\nabla\ue(x)+\ue(x)=0,\quad \,x\in\mathbb{A}_{1;\lambda},\vspace{6pt}\\
    \ds\ue(x)= \bi+ \int_{\mathbb{A}_{1;\lambda}}\gfi(y)\ue(y)\dy,\quad\mathrm{for}\quad|x|=1;\qquad\ue(x)= 0,\quad\mathrm{for}\quad|x|=\lambda,
\end{cases}
\end{align}
where
\begin{equation}\label{moon-c0}
    \gfi(y)=C_0\frac{\text{e}^{-|y|}}{|y|}\sin(|y|)
\end{equation}
obeys the assumption \eqref{gio}, and $C_0$ is a constant.

We will apply Proposition~\ref{m-cor} to equation~\eqref{Buu1} since its consequence still holds for equation~\eqref{equl}--\eqref{bdu} when the domain $\lomega$ is replaced with $\mathbb{A}_{1;\lambda}$. First, one solves the corresponding equations \eqref{eqphl} and \eqref{eqpsl} with $\abar(x)=\frac{x}{|x|^2}$ and $h(x)=1$:
\begin{equation}\label{new-eqp}
 \phi_\lambda(x)=\frac{\text{e}^{-|x|}-\text{e}^{-2\lambda+|x|}}{\text{e}^{-1}-\text{e}^{-2\lambda+1}}, \ \ \psi_\lambda(x)=\frac{\text{e}^{|x|}-\text{e}^{2-|x|}}{\text{e}^\lambda-\text{e}^{2-\lambda}}. 
\end{equation}
Next, we shall determine a constant $C_0$ in \eqref{moon-c0} so that Proposition~\ref{m-cor}(ii) or (iii) occurs for ``infinitely many'' $\lambda$. By \eqref{new-eqp}, a simple calculation yields
\begin{equation}\label{moon-g}
   \begin{aligned} 
\int_{\mathbb{A}_{1;\lambda}}&\, \gfi(y)\phi_\lambda(y)\,\dy
=  \frac{2\pi C_0}{\text{e}^{-1}-\text{e}^{-2\lambda+1}} \int_1^\lambda (\text{e}^{-r}-\text{e}^{-2\lambda+r}) \text{e}^{-r}\sin r\, \text{d}r \\
=& \frac{2\pi C_0}{\text{e}^{-1}-\text{e}^{-2\lambda+1}} \left[\frac{\text{e}^{-2}}{5}(2\sin 1+\cos 1)- \text{e}^{-2\lambda}\left(\cos 1 -\cos \lambda +\frac15 (2\sin\lambda+\cos\lambda)\right)\right],
\end{aligned} 
\end{equation}
which implies 
\begin{equation}\label{moon-l}
    \lim_{\lambda\to\infty}\int_{\mathbb{A}_{1;\lambda}} \gfi(y)\phi_\lambda(y)\,\dy=\frac{2\pi C_0 }{5 \text{e}}(2\sin 1+\cos 1).
\end{equation}
On the other hand, by \eqref{big-i-a} with $\lomega=\mathbb{A}_{1;\lambda}$ and $\gfo=0$, we have
\begin{equation}\label{moon-d}
    \det(\mathcal{I}- \mathcal{R}_{\lambda})=1-\int_{\mathbb{A}_{1;\lambda}} \gfi(y)\phi_\lambda(y)\,\dy.
\end{equation}
As a consequence, we have the following results:\\
{\bf Case~1.} If $C_0\neq\ds \frac{5\text{e}}{2\pi(2\sin 1+\cos 1)}$, then by \eqref{moon-l}--\eqref{moon-d}, we have $\det(\mathcal{I}- \mathcal{R}_{\lambda})\neq0$ as $\lambda\gg1$. Applying Proposition~\ref{m-cor}(i) to this case, we verify that as $\lambda\gg1$, \eqref{Buu1} has a unique solution. \vspace{3pt}\\ 
{\bf Case~2.} If $C_0 =\ds \frac{5\text{e}}{2\pi(2\sin 1+\cos 1)}$, then by \eqref{moon-g} and \eqref{moon-d}, we obtain that (see the Appendix):
\begin{equation}\label{apid}
    \left\{ \lambda>1:\,\det(\mathcal{I}- \mathcal{R}_{\lambda})=0\right\}=\left\{2k\pi +1,\,(2 k-1)\pi +2\theta_0-1:\,k\in\mathbb{N}\right\}:=\boldsymbol{\mathsf{S}}_0,
\end{equation}
where $\theta_0=\arcsin\frac{2}{\sqrt{5}}\in(1,\frac{\pi}{2})$. Thus, by Proposition~\ref{m-cor}(i)--(iii) and \eqref{moon-d}--\eqref{apid}, we have the following results:
\begin{itemize}
    \item[\bf (I)] If $\lambda\not\in\boldsymbol{\mathsf{S}}_0$, then  $\mathcal{I}-\mathcal{R}_{\lambda}$ is  invertible. Therefore,  equation~\eqref{Buu1} has a unique solution. 
      \item[\bf (II)] If $\lambda\in\boldsymbol{\mathsf{S}}_0$, then for $\bi\neq0$, \eqref{Buu1} has no solution; for $\bi=0$,  \eqref{Buu1} has infinitely many solutions~$\ue=c_*\phl$, where $\phl$ is defined by \eqref{new-eqp} and $c_*\in\mathbb{R}$ is arbitrary.
\end{itemize} 
\end{example}

The rest of this paper is structured as follows. In Section~\ref{pre2}, we establish estimates of $\phl$ and $\psl$ in $\overline{\lomega}$ for $\lambda\gg1$, which is crucial for the proof of  Theorem~\ref{m-thm}. In Section~\ref{sec-pf}, we state the proof of Proposition~\ref{m-cor} and Theorem~\ref{m-thm} and provide a remark for the uniqueness under an approach of the maximum principle. Next, as a generalization of Theorem~\ref{m-thm}, in Section~\ref{sec-mb}, we consider  equation~\eqref{equl} in an expanding domain with numerous boundary components. Under the corresponding integral-type boundary conditions, we establish the uniqueness result which will be introduced in Theorem~\ref{thm2}.  We make the concluding remarks and future problems in Section~\ref{sec-open}. Finally, in the Appendix we state the proof of \eqref{apid} for the sake of clarity and completeness.

\section{\bf Preliminary estimates of $\phl$ and $\psl$}\label{pre2}
In this section, we first establish the required estimates of $\phl$ and $\psl$ for $\lambda\gg1$, where $\phl$ and $\psl$ are solutions of equations~\eqref{eqphl} and \eqref{eqpsl}, respectively.  Let $(\Phi_{\lambda}(z),\Psi_{\lambda}(z))=(\phl(x),\psl(x))$ with $z=\lambda^{-1}x\in\Omega$. Then $\Phi_{\lambda}(z)$ and $\Psi_{\lambda}(z)$ satisfy
\begin{align}\label{big-phi}
\begin{cases}
\ds-\frac{1}{\lambda^2}\Delta_z\Phi_{\lambda}(z)+ \frac{1}{\lambda}\abar(\lambda z)\cdot\nabla_z\Phi_{\lambda}(z)+h(\lambda z)\Phi_{\lambda}(z)=0\quad\mathrm{in}\quad \Omega,\\
    \Phi_{\lambda}=1\quad\mathrm{on}\,\,\partial\Omega_{\mathrm{in}},\qquad\Phi_{\lambda}=0\quad\mathrm{on}\,\,\partial\Omega_{\mathrm{out}},
\end{cases}
\end{align}
and
\begin{align}\label{big-psi}
\begin{cases}
\ds-\frac{1}{\lambda^2}\Delta_z\Psi_{\lambda}(z)+ \frac{1}{\lambda}\abar(\lambda z)\cdot\nabla_z\Psi_{\lambda}(z)+h(\lambda z)\Psi_{\lambda}(z)=0\quad\mathrm{in}\quad \Omega,\\
    \Psi_{\lambda}=0\quad\mathrm{on}\,\,\partial\Omega_{\mathrm{in}},\qquad\Psi_{\lambda}=1\quad\mathrm{on}\,\,\partial\Omega_{\mathrm{out}},
\end{cases}
\end{align}
respectively. Since $h>0$ (by \eqref{ab619}), the maximum principle implies $0\leq\Phi_{\lambda},\,\Psi_{\lambda}\leq1$ in $\Omega$.

We first establish an interior estimate for $\Phi_{\lambda}$. Multiplying both sides of the equation in \eqref{big-phi} by $\Phi_{\lambda}$, one may check from \eqref{ab619} that, if $\lambda>0$ is sufficiently large, there exists a positive constant $M$ independent of $\lambda$ such that
\begin{align*}
   \frac{1}{\lambda^2}\Delta_z\Phi_{\lambda}^2=&\, \frac{2}{\lambda^2}|\nabla_z\Phi_{\lambda}|^2+\frac{2}{\lambda}\abar(\lambda z)\Phi_{\lambda}\cdot\nabla_z\Phi_{\lambda}+2h(\lambda z)\Phi_{\lambda}^2\\
   \geq&\,2\left(h(\lambda z)-\frac{|\abar(\lambda z)|^2}{4}\right)\Phi_{\lambda}^2\geq\lambda^{-\kappa}M^2\Phi_{\lambda}^2,
\end{align*}
i.e.,
\begin{align}\label{es-l-ph}
  \Delta_z\Phi_{\lambda}^2\geq\lambda^{2-\kappa}M^2\Phi_{\lambda}^2\qquad\mathrm{in}\quad\Omega,  
\end{align}
where we notice $\lambda^{2-\kappa}\gg1$ since $0<\kappa<2$.

Let $\mu_1>0$ be the principal eigenvalue of $-\Delta_z$ in ${H}^1_0(\Omega)$, and let $\varphi_1$ be the corresponding positive eigenfunction with $||\varphi_1||_{\mathrm{L}^\infty(\Omega)}=1$. Consider the following auxiliary function:
\begin{align}\label{def-Upsilon}
    \mathcal{V}_{\lambda}(z) := \exp\left(-{M_1}\lambda^{1-\frac{\kappa}{2}}\varphi_1(z) \right)\qquad\mathrm{in}\quad\overline{\Omega}.
\end{align}
We will determine a positive constant $M_1$ such that \eqref{def-Upsilon} is a superposition of \eqref{es-l-ph}. First, it is easy to check that $\mathcal{V}_{\lambda}$ satisfies
\begin{align*}
  \Delta_z\mathcal{V}_{\lambda} =\lambda^{2-\kappa}\left(M_1\lambda^{\frac{\kappa}{2}-1}\mu_1 \varphi_1  +M_1^2 |\nabla \varphi_1|^2\right) \mathcal{V}_{\lambda}.  
\end{align*}
Since $0<\lambda^{\frac{\kappa}{2}-1}\ll1$, and $|\nabla \varphi_1|$ is bounded in $\overline{\Omega}$ (cf. \cite{GT2001}), one can choose a suitable $M_1>0$ such that $\Delta_z \mathcal{V}_{\lambda} \leq \lambda^{2-\kappa}M^2 \mathcal{V}_{\lambda}$ in $\Omega$ for sufficiently large $\lambda$. As a consequence, by combining this differential inequality with \eqref{es-l-ph}, we arrive at
\begin{align}\label{diff-ineq}
   \Delta_z(\Phi_{\lambda}^2- \mathcal{V}_{\lambda}) \geq \lambda^{2-\kappa}M^2(\Phi_{\lambda}^2- \mathcal{V}_{\lambda})\qquad\mathrm{in}\quad \Omega.
\end{align}
Note also that $\Phi_{\lambda}^2\leq1= \mathcal{V}_{\lambda}$ on $\partial\Omega$, i.e.
\begin{align}\label{bd-ineq}
 \Phi_{\lambda}^2- \mathcal{V}_{\lambda}\leq0\qquad\mathrm{on}\quad\partial\Omega.   
\end{align}
  Applying the standard comparison theorem to \eqref{diff-ineq}--\eqref{bd-ineq}, we obtain
 \begin{align}\label{v-Up}
 \Phi_{\lambda}^2(z)\leq \mathcal{V}_{\lambda}(z),\quad\mathrm{for\,\,all}\,\,z\in   \overline{\Omega}. 
 \end{align}

Now we claim that there exists a {positive} constant $M^*$ independent of $\lambda$ such that
\begin{align}\label{Upsilon-y}
    \mathcal{V}_{\lambda}(z)\leq\exp\left(-{M^*}\lambda^{1-\frac{\kappa}{2}}\mathrm{dist}(z,{\partial\Omega})\right),\,\,\mathrm{for}\,\,z\in\Omega.
\end{align}
Since $\partial\Omega$ is  smooth, the eigenfunction $\varphi_1$ is both bounded above and below by strictly positive multiples of $\mathrm{dist}(x,\partial\Omega)$ (see, e.g. \cite[Section 3]{D1987}). Here, for the sake of completeness, we offer an alternative proof for a required lower bound of $\varphi_1$ as follows. Since $\varphi_1$ is smooth and strictly positive in $\Omega$ and  $\varphi_1=0$ on $\partial\Omega$, by the Hopf's lemma, the outward normal derivative of $\varphi_1$ on $\partial\Omega$ is strictly negative, implying that $\varphi_1(x) \geq {M_2} \mathrm{dist}(x,\partial\Omega)$ in $\overline{\Omega_{d_2}}:=\{x\in\Omega:\mathrm{dist}(x,\partial\Omega)\leq d_2\}$, for a small $d_2>0$ and a positive constant $M_2$ depending on $d_2$, $\|\varphi_1\|_{\text{C}^2(\overline{\Omega_{d_2}})}$ and $\ds\min_{\partial\Omega}|\partial_{\vec{n}}\varphi_1|$. As a consequence,
\begin{align*}
 \inf_{z\in\Omega}\frac{\varphi_1(z)}{\mathrm{dist}(z,\partial\Omega)} \geq\min_{\overline{\Omega\setminus\Omega_{d_2}}}\left\{M_2,\frac{\varphi_1}{\mathrm{diam}(\Omega)}\right\}. 
\end{align*}
Along with \eqref{def-Upsilon}, we arrive at \eqref{Upsilon-y} with $M^*=\frac12M_1\min_{\overline{\Omega\setminus\Omega_{d_2}}}\left\{M_2,\frac{\varphi_1}{\mathrm{diam}(\Omega)}\right\}$. By \eqref{v-Up} and \eqref{Upsilon-y}, we have $0\leq\Phi_{\lambda}(z)\leq \exp\left(-M^*\lambda^{1-\frac{\kappa}{2}}\mathrm{dist}(z,{\partial\Omega})\right),\,\,\forall\,z\in   \overline{\Omega}$. That is,
 \begin{align}\label{ph-*}
 0\leq\phl(x)\leq \exp\left(-M^*\lambda^{-\frac{\kappa}{2}}\mathrm{dist}(x,{\partial\lomega})\right),\quad\mathrm{for\,\,all}\,\,x\in   \overline{\lomega}. 
 \end{align}
 Similarly, by \eqref{ab619},  \eqref{big-psi} and the fact that $0\leq\psl\leq1$ on the boundary, we also have
  \begin{align}\label{ps-*}
 0\leq\psl(x)\leq \exp\left(-M^*\lambda^{-\frac{\kappa}{2}}\mathrm{dist}(x,{\partial\lomega})\right),\quad\mathrm{for\,\,all}\,\,x\in   \overline{\lomega}. 
 \end{align}
 Estimates \eqref{ph-*} and \eqref{ps-*} play a crucial role in the proof of Theorem~\ref{m-thm}.
 \begin{remark}
 An alternative proof of  \eqref{ph-*} and \eqref{ps-*} can be found in \cite[Proposition 2]{L2016}; see also \cite[(2.30)]{L2016}.
 \end{remark}
\section{\bf Proofs of the main results}\label{sec-pf}
\subsection{Proof of Proposition~\ref{m-cor}}\label{sec-3_1}
For the sake of simplicity, we let $B_{\lambda,\mathrm{i}}=\bi+  \int_{\lomega}\gfi(y)\ue(y)\dy$ and $B_{\lambda,\mathrm{o}}=\bo+  \int_{\lomega}\gfo(y)\ue(y)\dy$. By \eqref{bdu} and \eqref{imp-u}, we have
\begin{align}\label{Bio}
\begin{cases}
B_{\lambda,\mathrm{i}}=\ds\bi+  \int_{\lomega}\gfi(y)\left(B_{\lambda,\mathrm{i}}\phl(y)+B_{\lambda,\mathrm{o}}\psl(y)\right)\!\text{d}y,\vspace{3pt}\\ B_{\lambda,\mathrm{o}}=\ds\bo+  \int_{\lomega}\gfo(y)\left(B_{\lambda,\mathrm{i}}\phl(y)+B_{\lambda,\mathrm{o}}\psl(y)\right)\!\text{d}y,
\end{cases}
\end{align}
which is equivalent to 
\begin{align}\label{sys-ir}
(\mathcal{I}- \mathcal{R}_{\lambda})
\begin{bmatrix}
B_{\lambda,\mathrm{i}} \vspace{3pt}\\
B_{\lambda,\mathrm{o}}
\end{bmatrix}
  =   \begin{bmatrix}
\bi \vspace{3pt}\\
\bo
\end{bmatrix},
\end{align}
where $\mathcal{I}$ and $\mathcal{R}_{\lambda}$ have been defined by \eqref{big-i-a}. 

 Accordingly, the existence, uniqueness, and multiplicity of the solutions to \eqref{equl}--\eqref{bdu} are determined by that of $(B_{\lambda,\mathrm{i}},B_{\lambda,\mathrm{o}})$ to \eqref{sys-ir}. If $\det\left(\mathcal{I}-\mathcal{R}_{\lambda}\right)\neq0$, then by applying Cramer's rule to \eqref{sys-ir}, we get a unique solution
\begin{align}\label{B-io}
    B_{\lambda,\mathrm{i}}=\det\left((\mathcal{I}-\mathcal{R}_{\lambda})^{-1}\mathcal{C}_{\psl}\right),\qquad\,B_{\lambda,\mathrm{o}}=\det\left((\mathcal{I}-\mathcal{R}_{\lambda})^{-1}\mathcal{C}_{\phl}\right),
\end{align}
where $\mathcal{C}_{\psl}$ and $\mathcal{C}_{\phl}$ have been defined by \eqref{big-ii}.  {Thus, $\det\left(\mathcal{I}-\mathcal{R}_{\lambda}\right)\neq0$ implies that \eqref{equl}--\eqref{bdu} has a unique solution $\ue$ satisfying \eqref{sol-u}.}  This completes the proof of (i). Similarly, we can prove (ii) and (iii) and complete the proof of Proposition~\ref{m-cor}.

The following corollary is a direct application of Proposition~\ref{m-cor}.
\begin{corollary}\label{cor-ch}
Under the same assumptions as in Proposition~\ref{m-cor}, we assume that $\lambda>0$ satisfies
\begin{equation}\label{c-g<1}
   \int_{\lomega}\left(|\gfi(y)|+|\gfo(y)|\right)\mathrm{d}y\leq1. 
\end{equation}
Then, $\det(\mathcal{I}- \mathcal{R}_{\lambda})>0$. In particular, equation~\eqref{equl}--\eqref{bdu} has a unique solution~$\ue$ satisfying~\eqref{sol-u}.
\end{corollary}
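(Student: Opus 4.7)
The strategy is to apply Proposition~\ref{m-cor}(i), so it suffices to show that the hypothesis \eqref{c-g<1} forces $\det(\mathcal{I}-\mathcal{R}_\lambda)>0$. Writing the entries of $\mathcal{R}_\lambda$ as $r_{11}=\int_\lomega \gfi\phl$, $r_{12}=\int_\lomega \gfi\psl$, $r_{21}=\int_\lomega \gfo\phl$, $r_{22}=\int_\lomega \gfo\psl$, my plan is to obtain strict row-sum bounds $|r_{i1}|+|r_{i2}|<1$ for $i=1,2$, and then to expand the $2\times 2$ determinant by hand.

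The first step is to show $0\le\phl,\psl\le 1$ in $\overline{\lomega}$ and, crucially, $\phl+\psl<1$ pointwise in $\lomega$. The non-negativity and unit upper bound follow from the standard maximum principle applied to \eqref{eqphl} and \eqref{eqpsl}, using $h>0$ from \eqref{ab619}. For the sum, set $w=\phl+\psl$: then $w=1$ on $\partial\lomega$ and $w$ solves the same linear equation, so $w-1$ satisfies
\[
-\Delta(w-1)+\abar\cdot\nabla(w-1)+h\,(w-1)=-h<0\ \ \text{in}\ \lomega,\qquad w-1=0\ \ \text{on}\ \partial\lomega,
\]
and the strong maximum principle (applicable since the zeroth-order coefficient $h$ is positive) forces $w<1$ strictly in $\lomega$. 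Using non-negativity of $\phl,\psl$ together with this strict bound, I then estimate, for each $i\in\{1,2\}$ and the corresponding $g\in\{\gfi,\gfo\}$,
\[
|r_{i1}|+|r_{i2}|\le \int_\lomega |g|(\phl+\psl) < \int_\lomega |g|
\]
whenever $g\not\equiv 0$ (the case $g\equiv 0$ being trivially zero); invoking \eqref{c-g<1} yields $|r_{i1}|+|r_{i2}|<1$ for both $i=1,2$.

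Finally I expand
\[
\det(\mathcal{I}-\mathcal{R}_\lambda)=(1-r_{11})(1-r_{22})-r_{12}r_{21}.
\]
Since $|r_{ii}|\le |r_{i1}|+|r_{i2}|<1$, both $1-r_{11}\ge 1-|r_{11}|>0$ and $1-r_{22}\ge 1-|r_{22}|>0$, and the strict row-sum bounds give $|r_{12}|<1-|r_{11}|\le 1-r_{11}$ together with $|r_{21}|<1-|r_{22}|\le 1-r_{22}$. Consequently,
\[
(1-r_{11})(1-r_{22})>|r_{12}|\,|r_{21}|\ge r_{12}\,r_{21},
\]
so $\det(\mathcal{I}-\mathcal{R}_\lambda)>0$; existence, uniqueness, and the representation \eqref{sol-u} then follow directly from Proposition~\ref{m-cor}(i). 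The only delicate point in the plan is the strict inequality $\phl+\psl<1$ in $\lomega$; without it the row-sum estimate would degenerate to $\le 1$, and the determinant bound to $\ge 0$, which would not suffice for the conclusion.
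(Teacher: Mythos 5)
Your proof is correct and follows the same overall strategy as the paper's --- reduce to showing $\det(\mathcal{I}-\mathcal{R}_{\lambda})>0$, bound the entries of $\mathcal{R}_{\lambda}$ via maximum-principle estimates on $\phl,\psl$, expand the $2\times 2$ determinant, and invoke Proposition~\ref{m-cor}(i) --- but the key estimate you use is genuinely different. The paper works only with the individual bounds $0\le\phl,\psl\le1$ together with $\phl\not\equiv1$ and $\psl\not\equiv1$: setting $A_{\lambda,\mathrm{i}}=\int_{\lomega}|\gfi(y)|\dy$ and $A_{\lambda,\mathrm{o}}=\int_{\lomega}|\gfo(y)|\dy$, each entry in the first (resp.\ second) row of $\mathcal{R}_{\lambda}$ is strictly smaller than $A_{\lambda,\mathrm{i}}$ (resp.\ $A_{\lambda,\mathrm{o}}$) in absolute value, and the expansion then yields $\det(\mathcal{I}-\mathcal{R}_{\lambda})>(1-A_{\lambda,\mathrm{i}})(1-A_{\lambda,\mathrm{o}})-A_{\lambda,\mathrm{i}}A_{\lambda,\mathrm{o}}=1-A_{\lambda,\mathrm{i}}-A_{\lambda,\mathrm{o}}\ge0$. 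You instead establish the sharper pointwise fact $\phl+\psl<1$ in $\lomega$ (by applying the strong maximum principle to $w-1$ with $w=\phl+\psl$, legitimate since $h>0$), which upgrades the entrywise bounds to strict row-sum bounds $|r_{i1}|+|r_{i2}|<1$ and lets you finish by a standard strict diagonal-dominance argument. Both routes are valid: yours requires one additional PDE lemma but makes the linear-algebra step routine, while the paper's avoids that lemma at the cost of a slightly more ad hoc splitting of the hypothesis $A_{\lambda,\mathrm{i}}+A_{\lambda,\mathrm{o}}\le1$ inside the determinant computation. You are also right to isolate, and you correctly dispose of, the degenerate case $g\equiv0$ on $\lomega$, where the strict integral inequality fails but the corresponding row of $\mathcal{R}_{\lambda}$ vanishes.
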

\begin{proof}
By \eqref{big-i-a}, we have
\begin{equation}\label{dd}
 \begin{aligned}
    \det(\mathcal{I}- \mathcal{R}_{\lambda})=&\,\left(1-\int_{\lomega}\gfi(y)\phl(y)\dy\right)\left(1-\int_{\lomega}\gfo(y)\psl(y)\dy \right)\\
    &\qquad-\int_{\lomega}\gfi(y)\psl(y)\dy\int_{\lomega}\gfo(y)\phl(y)\dy.
\end{aligned}   
\end{equation}
Let $\int_{\lomega}|\gfi(y)|\dy=A_{\lambda,\text{i}}$ and $\int_{\lomega}|\gfo(y)|\dy=A_{\lambda,\text{o}}$. Then, by \eqref{c-g<1}, we have $A_{\lambda,\text{i}}+A_{\lambda,\text{o}}\leq1$. Because $0\leq\phl,\,\psl\leq1$,  $\phl\not\equiv1$ and $\psl\not\equiv1$ in $\overline{\lomega}$, we have $\int_{\lomega}|\gfi(y)|\phl(y)\dy<A_{\lambda,\text{i}},\,\int_{\lomega}|\gfi(y)|\psl(y)\dy<A_{\lambda,\text{i}}$, $\int_{\lomega}|\gfo(y)|\phl(y)\dy<A_{\lambda,\text{o}}$ and $\int_{\lomega}|\gfo(y)|\psl(y)\dy<A_{\lambda,\text{o}}$. Thus, $1-\int_{\lomega}\gfi(y)\phl(y)\dy>1-A_{\lambda,\text{i}}\geq0$ and $1-\int_{\lomega}\gfo(y)\psl(y)\dy>1-A_{\lambda,\text{o}}\geq0$. Along with \eqref{dd}, we arrive at
\begin{align*}
    \det(\mathcal{I}- \mathcal{R}_{\lambda})>(1-A_{\lambda,\text{i}})(1-A_{\lambda,\text{o}})-A_{\lambda,\text{i}}A_{\lambda,\text{o}}=1-A_{\lambda,\text{i}}-A_{\lambda,\text{o}}\geq0,
\end{align*}
implying that $\mathcal{I}- \mathcal{R}_{\lambda}$ is invertible. Along with Proposition~\ref{m-cor}(i), we obtain the uniqueness of the solution $\ue$ of \eqref{equl}--\eqref{bdu} when \eqref{c-g<1} holds. Therefore, the proof of Corollary~\ref{cor-ch} is complete.
\end{proof}

  Finally, let us consider the case that $\gfi$ and $\gfo$ are non-negative.  Applying the technique of the maximum principle, we have the following result.

\begin{corollary}\label{m-prop}
Under the same assumptions as in Proposition~\ref{m-cor}, we assume that both $\gfi$ and $\gfo$ are non-negative, and $\lambda>0$ satisfies
\begin{align}\label{g<1}
 \int_{\lomega}\gfi(y)\dy<1\qquad\,and\qquad   \int_{\lomega}\gfo(y)\dy<1.
\end{align}
{Then,  one of the following statements holds:
\begin{itemize}
    \item[(S1)] $\det(\mathcal{I}-\mathcal{R}_{\lambda})\neq0$ and \eqref{equl}--\eqref{bdu} has a unique solution;
    \item[(S2)] $\det(\mathcal{I}-\mathcal{R}_{\lambda})=0$ and \eqref{equl}--\eqref{bdu} has no solution.
\end{itemize}
 }
\end{corollary}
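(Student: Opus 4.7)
My strategy is to establish uniqueness directly via the maximum principle and then combine with the trichotomy of Proposition~\ref{m-cor} to force the stated dichotomy (S1)/(S2). The three cases in Proposition~\ref{m-cor} are mutually exclusive and exhaust all possibilities, so it suffices to rule out case (ii) --- multiplicity of solutions --- under the present hypotheses. Case (i) then corresponds to (S1), while case (iii) corresponds to (S2).

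Suppose $u_1, u_2$ are two solutions of \eqref{equl}--\eqref{bdu} and set $w := u_1 - u_2$. Subtracting the boundary relations in \eqref{bdu} for $u_1$ and $u_2$ eliminates $\bi$ and $\bo$, so $w$ takes constant values on each boundary component:
\begin{align*}
w \equiv C_{\mathrm{i}} := \int_{\lomega} \gfi(y) w(y) \dy \text{ on } \gi, \qquad w \equiv C_{\mathrm{o}} := \int_{\lomega} \gfo(y) w(y) \dy \text{ on } \go,
\end{align*}
while $w$ solves the homogeneous equation $-\Delta w + \abar \cdot \nabla w + h w = 0$ in $\lomega$. The hypothesis \eqref{ab619} forces $h(x) > \tfrac14|\abar(x)|^2 \geq 0$ on $\overline{\lomega}$ (the possible singularity of the coefficients at the origin is irrelevant, since $0 \in \Omega_{\mathrm{in}}$ lies outside $\overline{\lomega}$), so the weak maximum principle delivers
\begin{align*}
\|w\|_{L^\infty(\overline{\lomega})} \leq \max(|C_{\mathrm{i}}|, |C_{\mathrm{o}}|).
\end{align*}

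Next I exploit the non-negativity of $\gfi, \gfo$ together with the strict bounds \eqref{g<1}: for $\|w\|_{L^\infty} > 0$,
\begin{align*}
|C_{\mathrm{i}}| \leq \int_{\lomega} \gfi(y) |w(y)| \dy \leq \|w\|_{L^\infty} \int_{\lomega} \gfi \dy < \|w\|_{L^\infty},
\end{align*}
and analogously $|C_{\mathrm{o}}| < \|w\|_{L^\infty}$. Combining with the previous inequality yields the strict contradiction $\|w\|_{L^\infty} < \|w\|_{L^\infty}$, forcing $w \equiv 0$ and hence uniqueness of any solution to \eqref{equl}--\eqref{bdu}. Therefore, when $\det(\mathcal{I} - \mathcal{R}_\lambda) = 0$ the only possibility remaining in Proposition~\ref{m-cor} is case (iii), i.e.\ (S2); when $\det(\mathcal{I} - \mathcal{R}_\lambda) \neq 0$, Proposition~\ref{m-cor}(i) directly gives (S1). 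The main technical point I anticipate is simply invoking the correct weak maximum principle $\|w\|_{L^\infty(\overline{\lomega})} \leq \|w\|_{L^\infty(\partial\lomega)}$ for the drift--reaction operator with H\"older coefficients and $h > 0$; this is standard but must be applied with care so that the strict inequalities in \eqref{g<1} propagate to the final contradiction.
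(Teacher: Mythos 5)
Your proposal is correct and follows essentially the same route as the paper: both establish that the difference $w=u_1-u_2$ of two solutions must vanish by combining the maximum principle for the operator with $h>0$ with the non-negativity of $\gfi,\gfo$ and the strict bounds \eqref{g<1}, and then invoke the trichotomy of Proposition~\ref{m-cor} to exclude case (ii) and obtain the dichotomy (S1)/(S2). The only cosmetic difference is that you package the maximum principle as the a priori bound $\|w\|_{L^\infty(\overline{\lomega})}\leq\max(|C_{\mathrm{i}}|,|C_{\mathrm{o}}|)$ and reach a single strict contradiction, whereas the paper argues by cases according to where $w$ attains its extrema; the conclusion and the underlying mechanism are identical.
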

\begin{proof}
We fix $\lambda>0$ satisfying \eqref{g<1}. First, we prove that the equation~\eqref{equl}--\eqref{bdu} has at most one solution. Suppose by contradiction that \eqref{equl}--\eqref{bdu} has at least two distinct solutions $U_1$ and $U_2$. For $U:=U_1-U_2$, we have 
\begin{align}\label{eq>U}
-\Delta\,U(x)+\abar(x)\cdot\nabla\,U(x)+h(x)U(x)=0\quad\mathrm{in}\quad \lomega,
\end{align}
where $U$ is {subject to} the integral-type boundary conditions
\begin{align}\label{bd>U}
\begin{cases}
\ds\,U(x)= \int_{\lomega}\gfi(y)U(y)\dy,\quad\,x\in\gi:=\left\{\lambda z:z\in\partial\Omega_{\mathrm{in}}\right\},\vspace{3pt}\\
\ds\,U(x)= \int_{\lomega}\gfo(y)U(y)\dy,\quad\,x\in\go:=\left\{\lambda z:z\in\partial\Omega_{\mathrm{out}}\right\}.
\end{cases}
\end{align}
To obtain a contradiction, we consider two situations without loss of generality.\\
{\bf Case~1.} $U$ attains both its maximum and minimum values at the boundary points. Since $\gfi$ and $\gfo$ are both non-negative, by \eqref{g<1} and \eqref{bd>U}, one immediately obtains $\ds\max_{\lomega}U\leq0\leq\min_{\lomega}U$. Hence, $U\equiv0$, implying that $U_1=U_2$, a contradiction.\\
{\bf Case~2.} $U$ attains its maximum value at an interior point $x_{\lambda}$ and its minimum value on the boundary. Then, by \eqref{ab619} and \eqref{eq>U}, we have $\ds\max_{\lomega}U=U(x_{\lambda})\leq0$. On the other hand, $\ds\min_{\lomega}U\geq0$ holds trivially due to \eqref{g<1}. This also implies $U_1=U_2$ and {leads to}  a contradiction. 

Hence, by Cases 1--2, equation~\eqref{equl}--\eqref{bdu} has at most one  solution. Furthermore, if  $\lambda>0$ satisfies $\det(\mathcal{I}-\mathcal{R}_{\lambda})\neq0$, then by Proposition~\ref{m-cor}(i), equation~\eqref{equl}--\eqref{bdu} has a unique solution, i.e. (S1) holds. If  $\lambda>0$ satisfies $\det(\mathcal{I}-\mathcal{R}_{\lambda})=0$, then by Proposition~\ref{m-cor}(iii), there holds $\det\mathcal{C}_{\Psi_{\lambda}}\neq0$, and the equation~\eqref{equl}--\eqref{bdu} has no solution, i.e., (S2) holds. Therefore, the proof of Corollary~\ref{m-prop} is completed.
\end{proof}
\subsection{Proof of Theorem~\ref{m-thm}} 
First, we  show that there exists $\lambda_*>0$ such that $\det(\mathcal{I}- \mathcal{R}_{\lambda})>0$ for $\lambda\in(0,\lambda_*)$. Recall $\Omega:=\Omega_{\mathrm{out}}\setminus\overline{\Omega_{\mathrm{in}}}$. One can check that
\begin{equation}\label{int-gg}
    \begin{aligned}
\int_{\lomega}\left(|\gfi(y)|+|\gfo(y)|\right)\mathrm{d}y=&\,\lambda^N\int_{\Omega_{\mathrm{out}}\setminus\overline{\Omega_{\mathrm{in}}}}\left(|\gfi(\lambda{z})|+|\gfo(\lambda{z})|\right)\mathrm{d}z\\
\leq&\,\frac{1}{\left(\text{dist}(0,\partial\Omega_{\mathrm{in}})\right)^N}\int_{\Omega_{\mathrm{out}}\setminus\overline{\Omega_{\mathrm{in}}}}|\lambda{z}|^N\left(|\gfi(\lambda{z})|+|\gfo(\lambda{z})|\right)\mathrm{d}z.    
\end{aligned}
\end{equation}
Here we have verified $0<\text{dist}(0,\partial\Omega_{\mathrm{in}})\leq\min\limits_{z\in\,\overline{\Omega_{\mathrm{out}}}\setminus\Omega_{\mathrm{in}}}|z|$ since $0\in\Omega_{\mathrm{in}}$. Note also that $\lim\limits_{\lambda\to0+}\max\limits_{z\in\,\overline{\Omega_{\mathrm{out}}}\setminus\Omega_{\mathrm{in}}}|\lambda{z}|=0$ since $\Omega_{\mathrm{out}}\setminus\overline{\Omega_{\mathrm{in}}}$ is a bounded domain. Thus, by \eqref{gio} and \eqref{int-gg}, we have 
\begin{equation*}
    \lim\limits_{\lambda\to0+}\int_{\lomega}\left(|\gfi(y)|+|\gfo(y)|\right)\mathrm{d}y=0,
\end{equation*}
 asserting that there exists $\lambda_*>0$ such that \eqref{c-g<1} holds for $\lambda\in(0,\lambda_*)$. Then, by Corollary~\ref{cor-ch}, we obtain $\det(\mathcal{I}- \mathcal{R}_{\lambda})>0$ for $\lambda\in(0,\lambda_*)$.

On the other hand, by \eqref{ph-*}--\eqref{ps-*}, one may check that, as $\lambda\gg1$, 

\begin{equation}\label{feb-0}
   \begin{aligned}
  \left|\int_{\lomega}\gfi(y)\phl(y)\dy\right|
    \leq&\max_{\overline{\lomega}}|\gfi|\left(\int_{\lomega}\exp\left(-M^*\lambda^{-\frac{\kappa}{2}}\mathrm{dist}(y,{\partial\lomega})\right)\dy\right)\\
   \leq&\,\lambda^N\max_{z\in\overline{\Omega}}|\gfi(\lambda{z})|\left(\int_{\Omega}\exp\left(-M^*\lambda^{1-\frac{\kappa}{2}}\mathrm{dist}(z,{\partial\Omega})\right)\dz\right).
\end{aligned} 
\end{equation}
Furthermore, we carefully deal with the last term of \eqref{feb-0}. In order to achieve a more refined estimate, we set a constant $\tau\in(0,1-\frac{\kappa}{2})$ independent of $\lambda$ such that as $\lambda>0$ is sufficiently large, the subdomain
\begin{equation*}
    \Omega_{\lambda^{-\tau}}^*:=\{z\in\Omega:\,\text{dist}(z,\partial\Omega)>\lambda^{-\tau}\}
\end{equation*}
 is nonempty, and each interior point of $\Omega\setminus\overline{\Omega_{\lambda^{-\tau}}^*}$ has a unique nearest point on $\partial\Omega$. Note that  $\partial\Omega_{\lambda^{-\tau}}^*$ is smooth and  $\max\limits_{z\in\overline{\Omega_{\lambda^{-\tau}}^*}}\exp(-M^*\lambda^{1-\frac{\kappa}{2}}\mathrm{dist}(z,{\partial\Omega}))\leq\exp\left(-M^*\lambda^{1-\frac{\kappa}{2}-\tau}\right)$. Utilizing the coarea formula~(cf.~\cite[Theorems~3.10 and 3.14]{EG2015}), one may check that, as $\lambda\gg1$,
\begin{equation}\label{feb}
   \begin{aligned}
    \int_{\Omega}\exp&\left(-M^*\lambda^{1-\frac{\kappa}{2}}\mathrm{dist}(z,{\partial\Omega})\right)\dz\\
    =&\,\int_0^{\lambda^{-\tau}}\exp\left(-M^*\lambda^{1-\frac{\kappa}{2}}t\right)\mathcal{H}^{N-1}(\{z\in\Omega:\,\text{dist}(z,\partial\Omega_{\mathrm{in}})=t\})\,\text{d}t\\
    &\quad+\int_0^{\lambda^{-\tau}}\exp\left(-M^*\lambda^{1-\frac{\kappa}{2}}t\right)\mathcal{H}^{N-1}(\{z\in\Omega:\,\text{dist}(z,\partial\Omega_{\mathrm{out}})=t\})\,\text{d}t\\
    &\quad+\int_{\Omega_{\lambda^{-\tau}}^*}\exp\left(-M^*\lambda^{1-\frac{\kappa}{2}}\mathrm{dist}(z,{\partial\Omega})\right)\dz\\
   \leq&\,2\left(\mathcal{H}^{N-1}(\partial\Omega_{\mathrm{in}})+\mathcal{H}^{N-1}(\partial\Omega_{\mathrm{out}})\right)\int_0^{\lambda^{-\tau}}\exp\left(-M^*\lambda^{1-\frac{\kappa}{2}}t\right)\mathrm{d}t\\
   &\quad+\mathcal{H}^{N}(\Omega)\exp\left(-M^*\lambda^{1-\frac{\kappa}{2}-\tau}\right)\\
  \leq&\,\frac{2\lambda^{\frac{\kappa}{2}-1}}{M^*}\left(\mathcal{H}^{N-1}(\partial\Omega_{\mathrm{in}})+\mathcal{H}^{N-1}(\partial\Omega_{\mathrm{out}})\right)+\mathcal{H}^{N}(\Omega)\exp\left(-M^*\lambda^{1-\frac{\kappa}{2}-\tau}\right)\leq{C_3}\lambda^{\frac{\kappa}{2}-1},
\end{aligned} 
\end{equation}
where $\mathcal{H}^{d}$ represents the $d$-dimensional Hausdorff measure on $\mathbb{R}^N$, and $C_3$ is a positive constant independent of $\lambda$. Since $0<\lambda^{-\tau}\ll1$, in \eqref{feb} we have used $\exp\left(-M^*\lambda^{1-\frac{\kappa}{2}-\tau}\right)\ll\lambda^{\frac{\kappa}{2}-1}$ (as $\lambda\gg1$), and the fact that $\mathcal{H}^{N-1}(\{z\in\Omega:\,\text{dist}(z,\partial\Omega_{\mathrm{in}})=t\})\leq2\mathcal{H}^{N-1}(\partial\Omega_{\mathrm{in}})$ and $\mathcal{H}^{N-1}(\{z\in\Omega:\,\text{dist}(z,\partial\Omega_{\mathrm{out}})=t\})\leq2\mathcal{H}^{N-1}(\partial\Omega_{\mathrm{out}})$ as $0<t\leq\lambda^{-\tau}\ll1$. Hence, by \eqref{gio} and \eqref{feb-0}--\eqref{feb}, one arrives at
\begin{align}\label{g12}
   \left|\int_{\lomega}\gfi(y)\phl(y)\dy\right|\leq{C_3}\lambda^{N+\frac{\kappa-2}{2}}\max_{z\in\overline{\Omega}}|\gfi(\lambda z)|\xrightarrow{\lambda\to\infty}0.
\end{align}
Similarly, we can obtain
\begin{align}\label{g3}
   \int_{\lomega}\gfo(y)\psl(y)\dy,\,\int_{\lomega}\gfi(y)\psl(y)\dy,\,\int_{\lomega}\gfo(y)\phl(y)\dy\xrightarrow{\lambda\to\infty}0.
\end{align}
As a consequence, by \eqref{big-i-a}--\eqref{big-ii}, \eqref{dd} and \eqref{g12}--\eqref{g3}, we have
\begin{align}\label{g5}
  \det(\mathcal{I}- \mathcal{R}_{\lambda})\xrightarrow{\lambda\to\infty}  1,\quad\det\mathcal{C}_{\psl}\xrightarrow{\lambda\to\infty}b_{\mathrm{i}},\quad\det\mathcal{C}_{\phl}\xrightarrow{\lambda\to\infty}b_{\mathrm{o}}.
\end{align}
Therefore, by \eqref{Bio} and \eqref{g5}, there exists a sufficiently large number $\lambda^*$ such that $\det(\mathcal{I}- \mathcal{R}_{\lambda})>0$ for $\lambda\in(\lambda^*,\infty)$. Consequently,
\begin{align}\label{det}
 \det(\mathcal{I}- \mathcal{R}_{\lambda})>0\qquad\mathrm{as}\quad\lambda\in(0,\lambda_*)\cup(\lambda^*,\infty).   
\end{align}
By \eqref{det} and Proposition~\ref{m-cor}(i), we obtain the uniqueness of solutions to \eqref{equl}--\eqref{bdu} with $\lambda\in(0,\lambda_*)\cup(\lambda^*,\infty)$.

It remains to prove \eqref{u-in-e}. By \eqref{B-io} and \eqref{g5}, we have $|B_{\lambda,\mathrm{i}}|+|B_{\lambda,\mathrm{o}}|\leq2(b_{\mathrm{i}}+b_{\mathrm{o}})$ for sufficiently large $\lambda>0$.
On the other hand, as $\lambda\gg1$, by \eqref{sol-u} and \eqref{ph-*}--\eqref{ps-*}, we arrive at an interior estimate of $\ue$:
\begin{align*}
 |\ue(x)|\leq |B_{\lambda,\mathrm{i}}|  \phl(x)+|B_{\lambda,\mathrm{o}}|  \psl(x)\leq(|B_{\lambda,\mathrm{i}}|+|B_{\lambda,\mathrm{o}}|)\exp\left(-M^*\lambda^{-\frac{\kappa}{2}}\mathrm{dist}(x,{\partial\lomega})\right),\quad\forall\,x\in   \overline{\lomega}.
\end{align*}
Therefore, we obtain \eqref{u-in-e} with $C^*=2(b_{\mathrm{i}}+b_{\mathrm{o}})$ and complete the proof of Theorem~\ref{m-thm}.

\begin{figure}[ht]
\includegraphics[width=6.6cm]{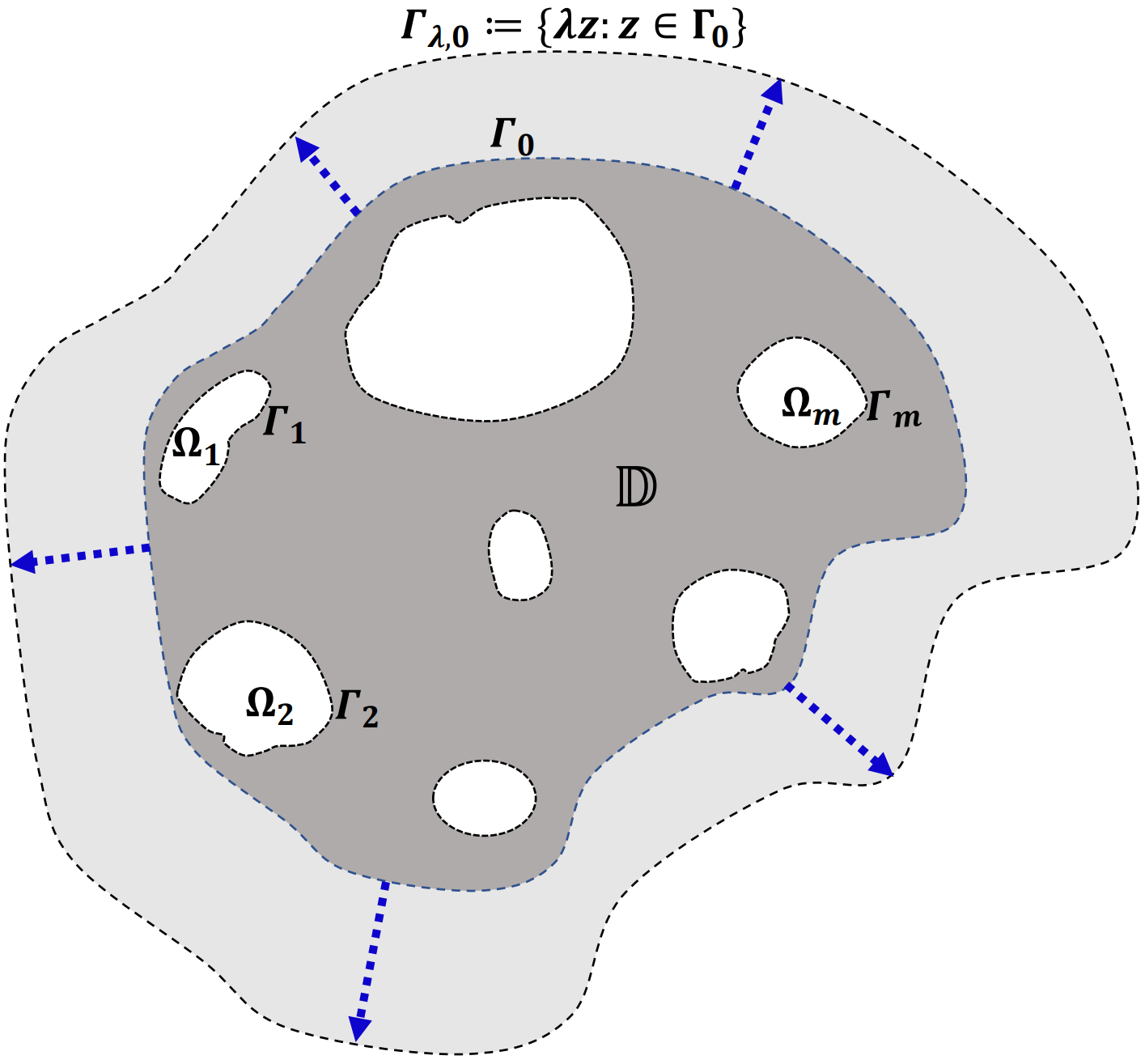}
\caption{The dark grey region shows a domain $\mathbb{D}:=\Omega_{0}\setminus\bigcup_{1\leq k\leq m}\overline{\Omega_{k}}$ with many boundary components~$\Gamma_j$'s, where $\Gamma_j:=\partial\Omega_j$, $j=0,1,...,m$. Here, $\Omega_0$ and $\Omega_{k}\Subset\Omega_{0}$ are bounded smooth domains in $\mathbb{R}^N$, $N\geq2$, and    all $\overline{\Omega_{k}}$'s are disjoint to each other.   Furthermore, we assume $0\notin\overline{\mathbb{D}}$. The corresponding expanding domain is defined by $\mathbb{D}_{\lambda}:=\{\lambda z:z\in\mathbb{D}\}$ with $m+1$ boundary components~$\Gamma_{\lambda,j}$'s.\label{fig-m}}
\end{figure}

\section{\bf A uniqueness result in the expanding domain with many boundary components}\label{sec-mb}
 Let $\mathbb{D}_{\lambda}$ be a bounded domain with $m+1$ smooth boundary components $\Gamma_j$'s, $j=0,1,...,m$. For the definitions of $\mathbb{D}_{\lambda}$ and $\Gamma_{\lambda,j}$'s, we refer the readers to the caption of Figure~\ref{fig-m}. Note that $\abar$ and $h$ are well-defined in $\overline{\mathbb{D}_{\lambda}}$ since $0\not\in\overline{\mathbb{D}_{\lambda}}$. It suffices to consider the equation
\begin{align}\label{eqv}
-\Delta\ve(x)+\abar(x)\cdot\nabla\ve(x)+h(x)\ve(x)=0\quad\mathrm{in}\quad \mathbb{D}_{\lambda},
\end{align}
with the integral-type boundary conditions
\begin{equation}\label{bdv}
    \ve(x)=b_j+  \int_{\mathbb{D}_{\lambda}}g_j(y)\ve(y)\dy,\quad\,\,x\in\Gamma_{\lambda,j}:=\{\lambda z:z\in\Gamma_{j}\},\quad\,j=0,1,...,m,
\end{equation}
where $b_j\in\mathbb{R}$ and $g_j\in\mathrm{C}(\mathbb{R}^N\setminus\{0\};\mathbb{R})$ are independent of $\lambda$. Here, we will focus primarily on the uniqueness problem of \eqref{eqv}--\eqref{bdv}.

Because there are $m+1$ boundary components $\Gamma_{\lambda,j}$'s, for each $\eta\in\{0,1,...,m\}$, let $\phi_{\lambda,\eta}$ be the unique solution of the equation
\begin{align}\label{eqphj}
\begin{cases}
-\Delta\phi_{\lambda,\eta}(x)+\abar(x)\cdot\nabla\phi_{\lambda,\eta}(x)+h(x)\phi_{\lambda,\eta}(x)=0\quad\mathrm{in}\quad \mathbb{D}_{\lambda},\\
\phi_{\lambda,\eta}(x)=1,\,\,\,x\in\Gamma_{\lambda,\eta},\qquad\phi_{\lambda,\eta}(x)=0,\,\,\,x\in\Gamma_{\lambda,j}\,\,\,(j\neq\eta).
\end{cases}
\end{align}
Owing to \eqref{ab619}, $\phi_{\lambda,\eta_1}$ and $\phi_{\lambda,\eta_2}$ are linearly independent ($\eta_1\neq\eta_2$). Hence, each solution $\ve$ of \eqref{eqv}--\eqref{bdv} (if it exists) is a linear combination of all $\phi_{\lambda,\eta}$'s with the coefficients determined by \eqref{bdv}. Therefore, we can apply a similar argument as in the proofs of Proposition~\ref{m-cor} and Theorem~\ref{m-thm} to arrive at the following result.

\begin{theorem}\label{thm2}
Assume that $b_j$'s, $j=0,1,...,m$, are constants independent of $\lambda$,  $h\in\mathrm{C}^{\alpha}(\mathbb{R}^N;\mathbb{R})$ and $\abar\in\mathrm{C}^{\alpha}(\mathbb{R}^N;\mathbb{R}^N)$ satisfying \eqref{ab619}. Let $g_j\in\mathrm{C}(\mathbb{R}^N;\mathbb{R})$ satisfy \eqref{gio} with $g=g_j$. Then, there hold the followings:
\begin{itemize}
    \item[(i)] Equation~\eqref{eqv}--\eqref{bdv} has a unique solution if and only if
 $\lambda>0$ satisfies
 \begin{equation*}
     \det\left(\mathcal{I}_{m+1}- \mathcal{G}_{\lambda}\right)\neq0,
 \end{equation*}
  where $\mathcal{I}_{m+1}$ is the $(m+1)\times(m+1)$ identity matrix, and
\begin{align*}
   \mathcal{G}_{\lambda}=   \left[\mathsf{G}_{\lambda,ij}\right]_{(m+1)\times(m+1)}\quad\text{with}\quad\mathsf{G}_{\lambda,ij}=\int_{\mathbb{D}_{\lambda}}g_{i-1}(y)\phi_{\lambda,j-1}(y)\dy,\,\,i,j\in\{1,...,m+1\}.
\end{align*}
\item[(ii)] There exist $\widetilde{\lambda}_*$ and $\widetilde{\lambda}^*$ with $0<\widetilde{\lambda}_*<\widetilde{\lambda}^*$ such that for $\lambda\in(0,\widetilde{\lambda}_*)\cup(\widetilde{\lambda}^*,\infty)$,  \eqref{eqv}--\eqref{bdv} has a unique solution~$\ve$ satisfying
  \begin{align*}
\ve(x)=\sum_{\eta=0}^mc_{\eta}\phi_{\lambda,\eta}(x),
  \end{align*}
where $\phi_{\lambda,\eta}$ is the unique solution of \eqref{eqphj},
and 
\begin{align*}
\begin{bmatrix}
c_0 \vspace{3pt}\\
c_1 \vspace{3pt}\\
\vdots\\
c_m \vspace{3pt}
\end{bmatrix}
  =  \left(\mathcal{I}_{m+1}- \mathcal{G}_{\lambda}\right)^{-1} \begin{bmatrix}
b_0 \vspace{3pt}\\
b_1 \vspace{3pt}\\
\vdots\\
b_m \vspace{3pt}
\end{bmatrix}.
\end{align*} 
\end{itemize}
\end{theorem}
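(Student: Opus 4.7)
The strategy is to mirror the two-step argument of Proposition~\ref{m-cor} and Theorem~\ref{m-thm}, replacing the pair $(\phl,\psl)$ by the full family $\{\phi_{\lambda,\eta}\}_{\eta=0}^m$ of Dirichlet building blocks from \eqref{eqphj}. For part (i), I would first observe that since $h>0$ and \eqref{ab619} holds, \eqref{eqphj} is uniquely solvable by standard elliptic regularity, so every classical solution $\ve$ of \eqref{eqv} with Dirichlet data $\ve|_{\Gamma_{\lambda,j}}=c_j$ is uniquely represented by the superposition $\ve(x)=\sum_{\eta=0}^m c_\eta\phi_{\lambda,\eta}(x)$. Setting $B_{\lambda,j}:=b_j+\int_{\mathbb{D}_\lambda}g_j(y)\ve(y)\dy$ as in \eqref{Bio} and substituting this representation into \eqref{bdv} produces the $(m{+}1)$-dimensional linear system $(\mathcal{I}_{m+1}-\mathcal{G}_\lambda)(B_{\lambda,0},\dots,B_{\lambda,m})^\top=(b_0,\dots,b_m)^\top$. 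Unique solvability of \eqref{eqv}--\eqref{bdv} is therefore equivalent to $\det(\mathcal{I}_{m+1}-\mathcal{G}_\lambda)\neq 0$, and in that case Cramer's rule recovers the coefficients $c_\eta$ exactly as asserted in (ii).

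For part (ii), the task reduces to showing $\det(\mathcal{I}_{m+1}-\mathcal{G}_\lambda)\neq 0$ in the two regimes. In the small-$\lambda$ regime, since $0\notin\overline{\mathbb{D}}$, the rescaling $y=\lambda z$ together with the second condition in \eqref{gio} yields, in direct analogy with \eqref{int-gg}, $\int_{\mathbb{D}_\lambda}|g_j(y)|\dy\le(\mathrm{dist}(0,\overline{\mathbb{D}}))^{-N}\int_{\mathbb{D}}|\lambda z|^N|g_j(\lambda z)|\dz\to 0$ as $\lambda\to 0^+$. Combined with the maximum-principle bound $0\le\phi_{\lambda,\eta}\le 1$ (immediate from $h>0$ and \eqref{eqphj}), every entry $\mathsf{G}_{\lambda,ij}$ vanishes as $\lambda\to 0^+$, so $\mathcal{I}_{m+1}-\mathcal{G}_\lambda\to\mathcal{I}_{m+1}$ and is invertible on some $(0,\widetilde{\lambda}_*)$. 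This is the natural generalisation of Corollary~\ref{cor-ch}.

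For the large-$\lambda$ regime, the core ingredient is the exponential decay $0\le\phi_{\lambda,\eta}(x)\le\exp\bigl(-M^*\lambda^{-\kappa/2}\mathrm{dist}(x,\partial\mathbb{D}_\lambda)\bigr)$ for every $\eta$, derived exactly as in Section~\ref{pre2}: multiplication of the rescaled equation by $\Phi_{\lambda,\eta}(z):=\phi_{\lambda,\eta}(\lambda z)$ and invocation of \eqref{ab619} produce the differential inequality $\Delta_z\Phi_{\lambda,\eta}^2\ge\lambda^{2-\kappa}M^2\Phi_{\lambda,\eta}^2$; the super-solution $\mathcal{V}_\lambda=\exp(-M_1\lambda^{1-\kappa/2}\varphi_1)$ built from the principal $H^1_0(\mathbb{D})$-eigenfunction $\varphi_1$ dominates $\Phi_{\lambda,\eta}^2$ on $\partial\mathbb{D}$ and satisfies the opposite inequality inside, so the comparison principle plus the Hopf-lemma lower bound $\varphi_1\gtrsim\mathrm{dist}(\cdot,\partial\mathbb{D})$ delivers the pointwise decay. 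Feeding this into the coarea computation \eqref{feb-0}--\eqref{feb} gives $|\mathsf{G}_{\lambda,ij}|\le C\lambda^{N+(\kappa-2)/2}\max_{z\in\overline{\mathbb{D}}}|g_{i-1}(\lambda z)|\to 0$ by the first condition in \eqref{gio}, so $\mathcal{G}_\lambda\to 0$, $\det(\mathcal{I}_{m+1}-\mathcal{G}_\lambda)\to 1$, and invertibility holds on some $(\widetilde{\lambda}^*,\infty)$.

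The main obstacle is conceptually minor but notationally the delicate point: the super-solution construction of Section~\ref{pre2} depends only on the geometry of $\mathbb{D}$ (through $\varphi_1$) and on the uniform lower bound in \eqref{ab619}, neither of which references which component $\Gamma_\eta$ carries the Dirichlet value one, so the decay estimate transfers uniformly across all $m+1$ functions $\phi_{\lambda,\eta}$ without loss of constants. Once this is in place, the $(m{+}1)\times(m{+}1)$ matrix analysis is a direct upgrade of the $2\times 2$ determinant argument used for Theorem~\ref{m-thm}, with the intermediate range $[\widetilde{\lambda}_*,\widetilde{\lambda}^*]$ necessarily left open in accordance with the non-uniqueness phenomena illustrated by Examples~\ref{ex1} and~\ref{ex2}.
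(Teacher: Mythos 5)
Your proposal is correct and follows essentially the same route as the paper's own proof: part (i) by repeating the linear-system reduction of Proposition~\ref{m-cor} with the $(m+1)$ Dirichlet building blocks $\phi_{\lambda,\eta}$, and part (ii) by transferring the exponential decay estimate \eqref{ph-*} to each $\phi_{\lambda,\eta}$, feeding it into the coarea computation \eqref{feb-0}--\eqref{feb} for $\lambda\gg1$, and using the rescaling argument of \eqref{int-gg} (valid since $0\notin\overline{\mathbb{D}}$) for $\lambda\to0^+$. Your remark that the supersolution construction is uniform over the boundary components is exactly the point the paper relies on implicitly.
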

\begin{proof}
The proof of (i) follows the same argument as in the proof of Proposition~\ref{m-cor}(i). We omit the detail here because of its simplicity.

Now we state the proof of (ii). Applying a similar argument as \eqref{ph-*} to \eqref{eqphj}, we can establish the following estimate for each $\phi_{\lambda,\eta}$:
 \begin{align}\label{ph-ls}
 0\leq\phi_{\lambda,\eta}(x)\leq \exp\left(-M^*\lambda^{-\frac{\kappa}{2}}\mathrm{dist}(x,{\partial\mathbb{D}_{\lambda}})\right),\quad\mathrm{for\,\,all}\,\,x\in   \overline{\mathbb{D}_{\lambda}}. 
 \end{align}
 Moreover, by \eqref{gio} with $g=g_{i-1}$ and  \eqref{ph-ls} with $\eta=j-1$, we can use the same arguments as \eqref{feb-0}--\eqref{feb} to obtain $\lim\limits_{\lambda\to\infty}\mathsf{G}_{\lambda,ij}=0$. As a consequence, we arrive at $\lim\limits_{\lambda\to\infty}\det\left(\mathcal{I}_{m+1}- \mathcal{G}_{\lambda}\right)=1$. On the other hand, since $0\leq\phi_{\lambda,\eta}\leq1$ in $\overline{\mathbb{D}_{\lambda}}$ and $g_j$ satisfies \eqref{gio} with $g=g_j$ near $z=0$, one can follow the same argument as \eqref{int-gg} to obtain $\lim\limits_{\lambda\to0+}\mathsf{G}_{\lambda,ij}=0$ which implies $\lim\limits_{\lambda\to0+}\det(\mathcal{I}_{m+1}- \mathcal{R}_{\lambda})=1$.

Consequently, there exist $\widetilde{\lambda}_*$ and $\widetilde{\lambda}^*$ with $0<\widetilde{\lambda}_*<\widetilde{\lambda}^*$ such that for $\lambda\in(0,\widetilde{\lambda}_*)\cup(\widetilde{\lambda}^*,\infty)$, $\det(\mathcal{I}_{m+1}- \mathcal{R}_{\lambda})\neq0$ holds. Along with (i), we obtain the uniqueness result of \eqref{eqv}--\eqref{bdv} and complete the proof of Theorem~\ref{thm2}. 
\end{proof}

\section{\bf Concluding remarks  and future plans}\label{sec-open}
For equation~\eqref{equl}--\eqref{bdu} in the domain~$\lomega$ defined by \eqref{domain}, let us consider the set
\begin{align*}
    \mathfrak{S}:=\{\lambda>0:\det(\mathcal{I}- \mathcal{R}_{\lambda})=0\},
\end{align*}
where $\mathcal{I}$ and $\mathcal{R}_{\lambda}$ are defined by \eqref{big-i-a}. Generally, $\mathfrak{S}$ is non-empty because Example~\ref{ex1} is the case. Proposition~\ref{m-cor}   provide a basic understanding that equation~\eqref{equl}--\eqref{bdu} has a unique solution (satisfying \eqref{sol-u}) if and only if $\lambda\in\mathbb{R}^{>0}\setminus\mathfrak{S}$. Furthermore,  Theorem~\ref{m-thm} shows that under assumptions~\eqref{ab619} and \eqref{gio},  $\mathfrak{S}$ is contained in a bounded interval~$[\lambda_*,\lambda^*]$. For the general case when the domain $\lomega$ is replaced with a bounded smooth domain $\mathbb{D}_{\lambda}$ with many boundary components, the corresponding uniqueness result is stated by Theorem~\ref{thm2}. To the best of our knowledge, these results may contribute to the first understanding of the roles of domain size on the existence and uniqueness of the equation~\eqref{equl}--\eqref{bdu}.

{We shall emphasise that for $\lambda_s\in\mathfrak{S}$,  \eqref{equl}--\eqref{bdu} has infinitely many solutions if $\det\mathcal{C}_{\psi_{\lambda_s}}=0$, and has no solution if $\det\mathcal{C}_{\psi_{\lambda_s}}\neq0$}  (cf. Proposition~\ref{m-cor}(ii)--(iii)). Therefore, examining the number of elements in $\mathfrak{S}$ plays a crucial role in the uniqueness of  equation~\eqref{equl}--\eqref{bdu}.  Under assumptions~\eqref{ab619} and \eqref{gio}, we conjecture that $\mathfrak{S}$ only has finitely many elements since Example~\ref{ex1} provides  relevant evidence on this conjecture. The rigorous proof is a great challenge.

On the other hand, even if we  consider only the annular domain in Example~\ref{ex1}, a closer observation of equation \eqref{alg-eq} shows that $\lambda^*$ is influenced by the domain geometry. Thus, it is expected that for the  domain~$\Omega:=\Omega_{\mathrm{out}}\setminus\overline{\Omega_{\mathrm{in}}}$ with {non-constant} boundary mean curvature, the roots of equation $\det(\mathcal{I}- \mathcal{R}_{\lambda})=0$ may depend on the geometries of $\Omega_{\mathrm{in}}$ and $\Omega_{\mathrm{out}}$. A question naturally arises at this point:
\begin{align*}
    \text{`}&\textit{How\,\,to\,\,identify\,\,the\,\,effect\,\,of\,\,the\,\,geometries\,\,of}\,\, \Omega_{\mathrm{in}}\,\,\text{and}\,\,\Omega_{\mathrm{out}}\,\,
\textit{on\,\,the\,\,roots\,\,of}\,\, \det(\mathcal{I}- \mathcal{R}_{\lambda})=0\\
&\,\textit{more\,\,precisely?'}
\end{align*}

Furthermore, as shown in  Examples~\ref{ex1} and \ref{ex2}, it is expected that, for an equation in a bounded domain with the integral-type boundary conditions, the relationship between the domain capacity and the boundary condition may affect the uniqueness result of the solutions, which will be our future research directions.

\section{\bf Appendix: Proof of \eqref{apid}}
In this section, we state the proof of \eqref{apid} for the sake of clarity. By \eqref{moon-g} with $C_0 =\ds \frac{5\text{e}}{2\pi(2\sin 1+\cos 1)}$ and  \eqref{moon-d},  one may check via simple calculations that 
\begin{align*}
 \det(\mathcal{I}- \mathcal{R}_{\lambda})=&\,-\frac{\text{e}^{-2\lambda+1}}{\text{e}^{-1}-\text{e}^{-2\lambda+1}}\left[1- \frac{5}{2\sin1+\cos1}\left(\cos 1 -\cos \lambda +\frac15 (2\sin\lambda+\cos\lambda)\right)\right]\\
 =&\,-\frac{2\text{e}^{-2\lambda+2}(\sin1-2\cos1-\sin\lambda+2\cos\lambda)}{(1-\text{e}^{-2\lambda+2})(2\sin1+\cos1)}\\
 =&\,-\frac{4\sqrt{5}\text{e}^{-2\lambda+2}}{(1-\text{e}^{-2\lambda+2})(2\sin1+\cos1)}\cos\frac{1+\lambda-2\theta_0}{2}\sin\frac{1-\lambda}{2},\quad\text{where}\,\,\theta_0=\arcsin\frac{2}{\sqrt{5}}.
\end{align*}
 Here we employed some trigonometric identities to obtain $(\sin1-2\cos1)-(\sin\lambda-2\cos\lambda)=\sqrt{5}\big(\sin(1-\theta_0)-\sin(\lambda-\theta_0)\big)=2\sqrt{5}\cos\frac{1+\lambda-2\theta_0}{2}\sin\frac{1-\lambda}{2}$.  Note that $\lambda>1$ and $\frac{4\sqrt{5}\text{e}^{-2\lambda+2}}{(1-\text{e}^{-2\lambda+2})(2\sin1+\cos1)}\neq0$. Thus, we have
\begin{equation*}
 \det(\mathcal{I}- \mathcal{R}_{\lambda})=0\,\Longleftrightarrow\,\,\cos\frac{1+\lambda-2\theta_0}{2}\sin\frac{1-\lambda}{2}=0,
\end{equation*}
which immediately implies \eqref{apid}, as desired.

\subsection*{Acknowledgement} The authors would like to thank the anonymous referee and the editor who pointed out the importance of the domain capacity and made insightful suggestions that contributed to enhancing the overall quality of the original manuscript. The research of C.-C. Lee was partially supported by MOST grants with numbers 108-2115-M-007-006-MY2 and 110-2115-M-007-003-MY2 of Taiwan. The research of M. Mizuno was partially supported by JSPS KAKENHI grants with numbers JP18K13446 and JP22K03376.

\end{document}